\documentclass[
a4paper
]{amsart}
\usepackage{mathtools}
\usepackage{amssymb, amsthm, amsmath, pifont} 
\usepackage[usenames]{color}
\usepackage{paralist}
\usepackage{amsfonts}
\usepackage{enumerate}
\usepackage{pdfpages}
\usepackage[all]{xypic}
\usepackage{graphicx}
\usepackage{psfrag}
\usepackage{verbatim}
\usepackage{pstricks}
\usepackage[latin1]{inputenc}
\usepackage{mathrsfs}
\usepackage[all,knot]{xy}
\newtheorem{thm}{Theorem}
\newtheorem{corollary}[thm]{Corollary}

\newtheorem{lemma}[thm]{Lemma}
\newtheorem{question}[thm]{Question}
\newtheorem{example}[thm]{Example}
\newtheorem{definition}[thm]{Definition}
\newtheorem*{intdef}{Definition}
\newtheorem*{claim*}{Claim}
\newtheorem{remark}[thm]{Remark}
\newtheorem{prop}[thm]{Proposition}
\newtheorem{claim}[thm]{Claim}
\newtheorem{obs}[thm]{Observation}

\newenvironment{Example}{\begin{example}\rm}{\end{example}}

\newenvironment{Remark}{\begin{remark}\rm}{\end{remark}}

\newenvironment{Question}{\begin{question}\rm}{\end{question}}
\def\et{\;\mbox{and}\;}

\def\wr{{\rm{wr}}}

\def\fd{{\omega}}
\def\d{{\omega_*}}

\def\et{\quad\mbox{and}\quad}

\def\epsilon{\varepsilon}

\def\R{\mathbb{R}}
\def\Z{\mathbb{Z}}
\def\C{\mathbb{C}}

\begin{document}
\title[The slice-Bennequin inequality for the FDTC]
{The slice-Bennequin inequality for the fractional Dehn twist coefficient}\author{Peter Feller}

\address{ETH Z\"urich, R\"amistrasse 101, 8092 Z\"urich, Switzerland}
\email{peter.feller@math.ch}

\subjclass[2020]{57K10,  57K20}
\keywords{Fractional Dehn twist coefficient, slice-Bennequin inequality, braid group
}
\begin{abstract}We characterize the fractional Dehn twist coefficient (FDTC) on the $n$-stranded braid group as the unique homogeneous quasimorphism to $\R$ of defect at most 1 that equals 1 on the positive full twist and vanishes on the $(n-1)$-stranded braid subgroup.
In a different direction, we establish that the slice-Bennequin inequality holds with the FDTC in place of the writhe. In other words, we establish an affine linear lower bound for the smooth slice genus of the closure of a braid in terms of the braid's FDTC. We also discuss connections between these two seemingly unrelated results.
In the appendix we provide a unifying framework for the slice-Bennequin inequality and its counterpart for the FDTC.
\end{abstract}
\maketitle

\section{Introduction}

A \emph{quasimorphism} on a group $G$ is a function $f$ from $G$ to the real numbers $\R$ such that
$\sup_{a,b\in G}|f(ab)-f(a)-f(b)|<\infty$,
where $\sup_{a,b\in G}|f(ab)-f(a)-f(b)|$ is called the \emph{defect} of $f$ and is denoted by $D_f$.
A function $f\colon G\to\R$ is said to be \emph{homogeneous}
if $f(g^k)=kf(g)$ for all $g\in G$ and integers $k$.
In this article we focus on the \emph{fractional Dehn twist coefficient (FDTC)}, a certain homogeneous quasimorphism on the braid group
on $n$ strands.
The FDTC appears in several contexts concerning different aspects of low-dimensional topology; see for example Gabai-Oertel, Malyutin, and Honda-Kazez-Mati\'c
\cite{Gabai_EssentialLaminations3Manifolds,
Malyutin_Twistnumber,
HondaKazezMatic_RightVeeringI,
HondaKazezMatic_RightVeeringII}.

\subsection*{A characterization of the FDTC as the homogeneous quasimorphism of smallest defect}
For a fixed integer $n\geq 1$, we denote by
\[B_n=\left\langle a_1,\cdots,a_{n-1}\;\middle|\;a_ia_j=a_ja_i\text{ for }|i-j|\geq 2,a_ia_{i+1}a_i=a_{i+1}a_ia_{i+1}\right\rangle,\] \emph{Artin's braid group}~\cite{Artin_TheorieDerZoepfe}. For the entire text, we identify $B_{n-1}\subset B_n$ as a subgroup via the inclusion $\iota\colon B_{n-1}\to B_n,a_i\mapsto a_{i+1}$, whenever $n\geq 2$.
We delay an explicit definition of the FDTC. However, we recall that
the FDTC, denoted by $\fd\colon B_n\to \R$, is known to be a homogeneous quasimorphism of defect $\leq 1$ (in fact it is know to have defect 1 when $n\geq3$; compare Lemma~\ref{lemma:qmhaveD>=1}) that satisfies $\fd(B_{n-1})=\{0\}$ and $\fd\left(\Delta^2\right)=1$; see~\cite{Malyutin_Twistnumber}. Here $\Delta^2$ denotes $(a_1a_2\cdots a_{n-1})^n\in B_n$, which is known as the \emph{positive full twist} and, for $n\geq 3$, generates the center of $B_n$. We establish that these properties  characterize the FDTC.

\begin{thm}\label{thm:CharPropForFDTC} For every integer $n\geq 3$,
there exists a unique homogeneous quasimorphism
$\fd\colon B_n\to \R$ with defect at most 1
that satisfies the following properties:
\begin{inparaenum}[(i)]
  \item\label{item:norm} 
  $\fd\left(\Delta^2\right)=1$ and 
  \item\label{item:Bn-1=0} $\fd(\beta)=0$ for all $\beta \in B_{n-1}\subset B_n$.
  \end{inparaenum}
\end{thm}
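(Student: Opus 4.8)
The plan has two parts: existence, which is not new, and uniqueness, which carries all the content. For existence one invokes that the FDTC $\fd$ is a homogeneous quasimorphism of defect $\leq 1$ with $\fd(\Delta^2)=1$ and $\fd|_{B_{n-1}}=0$ (Honda--Kazez--Mati\'c, Malyutin). So let $\psi\colon B_n\to\R$ be \emph{any} homogeneous quasimorphism with $D_\psi\leq 1$, $\psi(\Delta^2)=1$ and $\psi|_{B_{n-1}}=0$; the goal is $\psi=\fd$. First I would harvest the cheap consequences: homogeneous quasimorphisms are conjugation invariant, so $\psi$ vanishes on every conjugate of $B_{n-1}$ --- in particular $\psi(a_i)=0$ for all $i$ (each $a_i$ lies in a conjugate of $B_{n-1}$) and $\psi$ vanishes on the ``opposite'' subgroup $\langle a_1,\dots,a_{n-2}\rangle$ --- and $\psi(\Delta^{2k})=k$ for all $k$. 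The organizing idea is that it suffices to prove one uniform estimate: $|\psi(\beta)-\lfloor\beta\rfloor|\leq C$ with $C$ independent of $\beta$, where $\lfloor\beta\rfloor$ is the Dehornoy floor, i.e. the integer with $\Delta^{2\lfloor\beta\rfloor}\preceq\beta\prec\Delta^{2\lfloor\beta\rfloor+2}$ in the Dehornoy order. Granting this, the analogous estimate for $\fd$ (Malyutin) gives $|\psi(\beta)-\fd(\beta)|\leq C'$ for all $\beta$; evaluating on the powers $\beta^{k}$ and using $\psi(\beta^{k})=k\psi(\beta)$, $\fd(\beta^{k})=k\fd(\beta)$, then letting $k\to\infty$, forces $\psi=\fd$.

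I would then peel the estimate down. Writing $\beta=\Delta^{2\lfloor\beta\rfloor}\gamma$ with $\gamma$ in the ``fundamental domain'' $S:=\{\gamma\in B_n: 1\preceq\gamma\prec\Delta^2\}$ and applying the quasimorphism inequality to the central factor gives $|\psi(\beta)-\lfloor\beta\rfloor-\psi(\gamma)|\leq D_\psi\leq 1$, so it is enough to bound $\psi$ on $S$. And for that it is enough to prove the single one-sided statement $\psi\geq 0$ on the Dehornoy-positive cone $P^{+}=\{\beta\succ 1\}$: this gives the lower bound on $S$ immediately (elements of $S$ are Dehornoy-nonnegative), and it gives the upper bound on $S$ by applying $\psi\ge 0$ to the factor $\gamma^{-1}\Delta^2\in P^{+}$ of $\Delta^2=\gamma\cdot(\gamma^{-1}\Delta^2)$, together with $\psi(\Delta^2)=1$ and $D_\psi\le 1$, which yields $\psi(\gamma)\le 2$. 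Note that $\fd$ does satisfy $\fd\geq 0$ on $P^{+}$, since $\lfloor\beta^{k}\rfloor\geq 0$ for $\beta\in P^{+}$; so the target is consistent, and the whole theorem has been reduced to the assertion: \emph{$\psi\geq 0$ on every Dehornoy-positive braid.}

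This reduced assertion is the main obstacle. The naive attack --- decompose a $\sigma_1$-positive braid as $w_0a_1w_1a_1\cdots a_1w_k$ with $w_i\in B_{n-1}$, so that $\psi\geq -2k\,D_\psi$ after splitting and using $\psi(w_i)=\psi(a_1)=0$ --- is not uniform, since the number $k$ of bands is unbounded (already for $a_1^{k}$), and control on such monomials comes only from homogeneity, not from the decomposition. A uniform argument should instead go through the dynamical realization of the Dehornoy order: $B_n$ acts faithfully on $\R$ with $\Delta^2$ acting as the unit translation, and since $B_{n-1}\subseteq\{\Delta^{-2}\prec\,\cdot\,\prec\Delta^{2}\}$ has a bounded orbit, $B_{n-1}$ and each of its conjugates fixes a point; this realizes $B_n$ inside $\widetilde{\mathrm{Homeo}}^{+}(S^{1})$ with $\fd$ the rotation number, the underlying action of $B_n/\langle\Delta^2\rangle$ on $S^{1}$ being minimal for $n\geq 3$. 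The desired positivity --- equivalently $\psi=\fd$ --- should then follow from a rigidity property of the rotation number: a homogeneous quasimorphism of defect $\leq 1$ that takes value $1$ on the central translation and vanishes on the point-stabilizer $B_{n-1}$ must coincide with the rotation number, the point being that $D_\psi\leq 1$ forces $\psi$ to respect the circular order while minimality removes the remaining freedom. Making this rigidity input precise, and carrying out the bookkeeping that identifies $\fd$ with the rotation number of the Dehornoy action, is where I expect the real difficulty to lie.
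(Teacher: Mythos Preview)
Your reduction is correct and in fact matches the paper's overall shape: both arguments come down to showing $\psi(\beta)\geq 0$ whenever $\beta$ admits a $\sigma_1$-positive word (the $\sigma_i$-positive case for $i\geq 2$ being immediate since then $\beta\in B_{n-1}$). But you stop at precisely the point where the content lies. The dynamical route you sketch---invoking an unstated rigidity of the rotation number among defect-$1$ homogeneous quasimorphisms vanishing on a point stabiliser---is not a proof; no such off-the-shelf statement is available, and making one precise would amount to reproving the theorem you are trying to establish.

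The real gap is that you dismissed the combinatorial approach too quickly. The paper's argument is exactly a refinement of the ``naive attack'' you discarded. Write the $\sigma_1$-positive braid as $\beta=\prod_{i=1}^{2l}(a_1\beta_i)$ with $\beta_i\in B_{n-1}$ (squaring first if the number of $a_1$'s is odd). A short manipulation using $\Delta^{-1}\alpha=\overline{\alpha}\Delta^{-1}$ together with the factorizations $\Delta=(a_1\cdots a_{n-1})\Delta_L=(a_{n-1}\cdots a_1)\Delta_R$ shows that $\beta$ is conjugate to $\Delta^{2l}\prod_{i=1}^{l}L_iR_i$, where each $R_i\in\langle a_2,\dots,a_{n-1}\rangle=B_{n-1}$ and each $L_i\in\langle a_1,\dots,a_{n-2}\rangle$. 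Now the defect bound in the form $\psi(\alpha\gamma\delta)\geq\psi(\gamma)+\psi(\alpha\delta)-1$ (conjugation invariance plus $D_\psi\leq 1$), applied $l$ times to peel off the $L_i$'s, gives
\[
\psi(\beta)=l+\psi\Bigl(\prod_{i=1}^{l}L_iR_i\Bigr)\;\geq\; l+\sum_{i=1}^{l}\psi(L_i)+\psi\Bigl(\prod_{i=1}^{l}R_i\Bigr)-l\;=\;0,
\]
since $\psi$ vanishes on $B_{n-1}$ and (by conjugation invariance) on $\langle a_1,\dots,a_{n-2}\rangle$. The mechanism is that the $+l$ contributed by $\Delta^{2l}$ exactly cancels the $-l$ from the $l$ uses of the defect; this is why the threshold $D_\psi\leq 1$ is sharp, and why your non-uniform estimate $\psi\geq -2kD_\psi$ needed only a smarter grouping of the factors---pairing the $a_1$'s so as to manufacture one $\Delta^2$ per pair---rather than an entirely different method.
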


We exclude considerations for $n=2$  (and $n=1$) as it is clear that there is at most one homogeneous quasimorphism on the infinite cyclic group $B_2$ (and the trivial group $B_1$) that sends $\Delta^2$ to a given value since homogeneous quasimorphisms on Abelian groups are group homomorphisms. 
In contrast to this and to Theorem~\ref{thm:CharPropForFDTC}, for $n\geq 3$ there are many homogeneous quasimorphisms on $B_n$.

\begin{prop}\label{prop:uncqms}For every integer $n\geq 3$ and every $\epsilon>0$,
there exist continuum-many linearly independent homogeneous quasimorphisms $f\colon B_n\to\R$ with defect at most $1+\epsilon$ that satisfy (i) $f\left(\Delta^2\right)=1$ and (ii) $f(B_n)=\{0\}$.
\end{prop}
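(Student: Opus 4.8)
The plan is to realize the required quasimorphisms in the form $\fd+g$, where $\fd$ is the FDTC --- used only as a black box, via its known properties that it is a homogeneous quasimorphism with $D_\fd\le1$, $\fd(\Delta^2)=1$ and $\fd(B_{n-1})=\{0\}$ (see \cite{Malyutin_Twistnumber}, or Theorem~\ref{thm:CharPropForFDTC}) --- and $g$ runs over a large family of homogeneous quasimorphisms that vanish on $\Delta^2$ and on $B_{n-1}$ and have arbitrarily small defect. So the first step is the reduction to: \emph{there exist continuum-many linearly independent homogeneous quasimorphisms $g\colon B_n\to\R$ with $g(\Delta^2)=0$ and $g(B_{n-1})=\{0\}$.} Given such a family $\{g_t\}_{t\in T}$, each has $D_{g_t}>0$ (a homogeneous quasimorphism of zero defect is a homomorphism, hence a multiple of the exponent sum $B_n\to\Z$, which does not vanish on $B_{n-1}$), so after rescaling I may assume $D_{g_t}\le\epsilon$; then $f_t:=\fd+g_t$ has $D_{f_t}\le D_\fd+D_{g_t}\le1+\epsilon$, $f_t(\Delta^2)=1$ and $f_t(B_{n-1})=\{0\}$, and linear independence of $\{f_t\}$ follows from that of $\{g_t\}$ by evaluating any finite relation $\sum\lambda_tf_t=0$ at $\Delta^2$ (which forces $\sum\lambda_t=0$, hence $\sum\lambda_tg_t=0$, hence $\lambda_t\equiv0$).

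To produce the $g_t$, I pass to $\overline{B}_n:=B_n/\langle\Delta^2\rangle$, which for $n\ge3$ equals $B_n/Z(B_n)$ and acts on the curve complex $\mathcal{C}(D_n)$ of the $n$-punctured disk (this is well defined because the full twist $\Delta^2$, being a boundary Dehn twist, acts trivially on $\mathcal{C}(D_n)$). For $n\ge3$ this action carries the standard package needed for the Bestvina--Fujiwara method: $\mathcal{C}(D_n)$ is $\delta$-hyperbolic, the action is non-elementary (there are independent pseudo-Anosov braids), and pseudo-Anosov classes act as WPD loxodromic isometries. The Bestvina--Fujiwara construction then yields an infinite independent family of homogeneous quasimorphisms $\widetilde{h_i}$ on $\overline{B}_n$, obtained by homogenizing counting quasimorphisms $h_i=c_{w_i}-c_{\overline{w_i}}$ along an infinite independent family of axes (of pairwise independent pseudo-Anosovs $\psi_i$), normalized so that $\widetilde{h_i}(\psi_j)=\delta_{ij}$. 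Pulling these back along $B_n\twoheadrightarrow\overline{B}_n$ gives homogeneous quasimorphisms on $B_n$ vanishing on $\langle\Delta^2\rangle$, so in particular evaluating to $0$ on $\Delta^2$. For the vanishing on $B_{n-1}$: the subgroup $B_{n-1}=\langle a_2,\dots,a_{n-1}\rangle$ fixes the isotopy class $\gamma_0$ of an essential simple closed curve enclosing exactly the $n-1$ punctures it permutes, so its image in $\overline{B}_n$ fixes $\gamma_0\in\mathcal{C}(D_n)$; taking $\gamma_0$ as basepoint, this image has an orbit of diameter $0$. Since $|h_i(g)|$ is bounded by the length of a quasigeodesic from $\gamma_0$ to $g\cdot\gamma_0$, and hence by $d_{\mathcal{C}}(\gamma_0,g\cdot\gamma_0)+O(1)$ \emph{with the implied constant uniform in $i$}, every homogenized combination $\sum_i\lambda_i\widetilde{h_i}$ vanishes on every element with a diameter-$0$ orbit --- so, crucially with \emph{no special choice of the counting data}, all of the Bestvina--Fujiwara quasimorphisms (and their pullbacks) vanish on $B_{n-1}$.

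Finally, to secure \emph{continuum-many} linearly independent such $g$, I would fix an almost-disjoint family $\{A_t\}_{t\in T}$ of infinite subsets of $\N$ with $|T|=2^{\aleph_0}$ together with a rapidly decreasing sequence $\lambda_i>0$ with $\sum_i\lambda_iD_{h_i}<\infty$, and set $g_t:=\sum_{i\in A_t}\lambda_i\widetilde{h_i}$; this is a homogeneous quasimorphism on $B_n$ inheriting $g_t(\Delta^2)=0$ and $g_t(B_{n-1})=\{0\}$ from the $\widetilde{h_i}$, one has $g_t(\psi_j)=\lambda_j\,[\,j\in A_t\,]$, and almost-disjointness lets one peel off the coefficients in any finite relation $\sum_t c_t g_t=0$ by evaluating at a suitable $\psi_j$. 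I expect the genuine obstacle to be exactly this combination of demands --- a continuum of quasimorphisms, each killing the entire infinite nonabelian subgroup $B_{n-1}$ together with the center --- and the point that makes it go through is that the vanishing comes for free from the bounded-orbit estimate (which does not see the choice of counting data), so one can first fix the vanishing for the whole Bestvina--Fujiwara family and only afterwards extract a continuum-sized linearly independent subfamily. The remaining verifications --- hyperbolicity, non-elementarity and WPD for the $\overline{B}_n$-action on $\mathcal{C}(D_n)$, triviality of the $\Delta^2$-action, and essentiality of $\gamma_0$ --- are standard, and this is where $n\ge3$ is used.
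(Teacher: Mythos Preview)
Your approach is essentially the same as the paper's: both reduce to producing a large supply of Bestvina--Fujiwara quasimorphisms on $B_n/\langle\Delta^2\rangle$ that vanish on the image of $B_{n-1}$ (because that subgroup fixes a vertex of the curve complex), and then add the FDTC to hit the normalization $f(\Delta^2)=1$. The paper packages the ``continuum-many'' step via an $\ell^1$-embedding and a Hamel basis rather than your almost-disjoint families, but this is cosmetic.

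One point worth tightening: you work with the curve complex $\mathcal{C}(D_n)$ of the $n$-punctured disk, whereas the paper passes to the $(n+1)$-punctured sphere (identifying $B_n/\langle\Delta^2\rangle$ with a finite-index subgroup of its mapping class group) and uses $\mathcal{C}(S_{0,n+1})$. This matters for small $n$: with the standard definition, $\mathcal{C}(D_3)$ is totally disconnected (any two non-isotopic essential curves intersect), so the Bestvina--Fujiwara machinery does not apply directly. The paper's choice sidesteps this, since $\mathcal{C}(S_{0,4})$ is the Farey graph. Your argument goes through once you either cap off the boundary to a puncture or adopt the usual modified complex for sporadic surfaces; just be explicit about it. Also, the uniformity ``in $i$'' you assert for the $O(1)$ constant is unnecessary (and not obviously true for arbitrary basepoints): since you take $\gamma_0$ as common basepoint, each $h_i$ vanishes identically on powers of any $r\in B_{n-1}$, so each $\widetilde{h_i}(r)=0$ and the sum vanishes termwise.
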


We briefly comment on the two assumptions~\eqref{item:norm} and~\eqref{item:Bn-1=0}.

\eqref{item:norm} can be understood as a normalization condition.
In other words, Theorem~\ref{thm:CharPropForFDTC} says that the homogeneous quasimorphisms $f\colon B_n\to \R$ that satisfy \[D_{f}\leq \left|f(\Delta^2)\right|\et f(B_{n-1})=\{0\}\] form a 1-dimensional $\R$-subspace of the vector space of functions from $B_n$ to $\R$,
while Proposition~\ref{prop:uncqms} says that the $\R$-subspace generated by homogeneous quasimorphisms $f\colon B_n\to \R$ that satisfy \[D_{f}\leq (1+\epsilon)\left|f(\Delta^2)\right|\et f(B_{n-1})=\{0\}\] has uncountably infinite dimension. 

Every homogeneous quasimorphism $f\colon B_n\to \R$ can be written as the sum of a homogeneous quasimorphism that satisfy~\eqref{item:Bn-1=0} and a homogeneous quasimorphism that is determined by the homogeneous quasimorphism ${B_{n-1}}\to \R, \beta\mapsto f(\iota(\beta))$~\cite[Theorem~2]{Malyutin_09_pchar}. So, informally speaking, understanding homogeneous quasimorphisms on $B_n$ amounts to understanding homogeneous quasimorphisms that satisfy~\eqref{item:Bn-1=0} on $B_n$ and homogeneous quasimorphisms on $B_{n-1}$.

\subsection*{The slice-Bennequin inequality for the FDTC}
For a link $L$---a non-empty oriented smooth 1-submanifold of the 3-sphere $S^3$---denote by $\chi_4(L)$ the largest integer among the Euler characteristics of smooth oriented surfaces in the $4$-ball $B^4$ without closed components and oriented boundary $L\subset \partial B^4=S^3$. In particular, for a knot $K$---a connected link---one has
$2g_4(K)=1-\chi_4(K)$,
where $g_4$ denotes the \emph{slice genus}.
The \emph{slice-Bennequin inequality} states that
\begin{equation}\label{eq:sBineqwr}
|\wr(\beta)|\leq -\chi_4\left(\widehat{\beta}\right)+n\quad \text{
for all }\beta\in B_n\quad
\text{\cite
{rudolph_QPasObstruction
,KronheimerMrowka_GenusofEmb}},
\end{equation}
where $\wr\colon B_n\to \Z$ denotes the \emph{writhe}, the group homomorphism with $\wr(a_i)=1$, and $\widehat{\beta}$ denotes the link obtained as the closure of $\beta$.
For $\beta$ with closure a knot, \eqref{eq:sBineqwr} reads
$\left|\wr(\beta)\right|\leq 2g_4\left(\widehat{\beta}\right)+n-1$.

One may wonder which other maps $f\colon B_n\to \R$ satisfy a similar inequality. Concretely, \cite[Question~1.6]{HKKMPRTT} asks whether, for each $n\geq 3$, there exist constants $A(n)$ and $C(n)$ such that
$\fd(\beta)\leq A(n)g_4\left(\widehat{\beta}\right)+C(n)$ for all $\beta\in B_n$ with closure a knot. 
We answer affirmatively with $A(n)$ independent of $n$; concretely, $A(n)=2$, which is optimal (e.g.~by the examples from~\cite[Prop.~4.7]{HKKMPRTT}).
\begin{thm}\label{thm:sBineq} For all integers $n\geq 3$, we have that
the FDTC $\fd\colon B_n\to \R$ satisfies \[|\fd(\beta)|\leq -\chi_4\left(\widehat{\beta}\right)+n\text{ for all }\beta\in B_n.\]
\end{thm}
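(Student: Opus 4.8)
The proof rests on a single cobordism-theoretic input — a slice-Bennequin inequality for the Dehornoy floor — together with a homogenization argument. Write $g(\beta):=-\chi_4(\widehat\beta)+n$; note $g(\beta)\ge 0$, since the closure of an $n$-braid has at most $n$ components and $\chi_4$ of an $\ell$-component link is at most $\ell$. The map $g$ is \emph{subadditive}: there is a cobordism from $\widehat{\alpha}\sqcup\widehat{\gamma}$ to $\widehat{\alpha\gamma}$ assembled from $n$ saddles (split the solid torus carrying $\widehat{\alpha\gamma}$ along a meridian disk meeting the braid transversally in $n$ points), so $\chi_4(\widehat{\alpha\gamma})\ge\chi_4(\widehat\alpha)+\chi_4(\widehat\gamma)-n$, that is, $g(\alpha\gamma)\le g(\alpha)+g(\gamma)$, and therefore $g(\beta^m)\le m\,g(\beta)$ for all $m\ge 1$. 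The crucial point is
\begin{equation}\label{eq:sBfloor}
\lfloor\beta\rfloor\le g(\beta)=-\chi_4(\widehat\beta)+n\qquad\text{for all }\beta\in B_n,
\end{equation}
where $\lfloor\beta\rfloor$ denotes the \emph{Dehornoy floor}, the unique integer with $\Delta^{2\lfloor\beta\rfloor}\preceq\beta\prec\Delta^{2\lfloor\beta\rfloor+2}$ in Dehornoy's left-ordering. Granting~\eqref{eq:sBfloor}, one concludes as follows: by Malyutin's comparison $\lfloor\gamma\rfloor\ge\fd(\gamma)-1$ of the FDTC with the Dehornoy floor~\cite{Malyutin_Twistnumber}, applied to $\gamma=\beta^m$ and using $\fd(\beta^m)=m\fd(\beta)$, one gets $\lfloor\beta^m\rfloor\ge m\fd(\beta)-1$, so $m\fd(\beta)-1\le\lfloor\beta^m\rfloor\le g(\beta^m)\le m\,g(\beta)$, whence $\fd(\beta)\le g(\beta)+\frac1m$ for every $m$ and thus $\fd(\beta)\le -\chi_4(\widehat\beta)+n$. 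The reverse estimate $-\fd(\beta)\le -\chi_4(\widehat\beta)+n$ is vacuous when $\fd(\beta)\ge 0$ (as $g(\beta)\ge 0$) and follows in general from the mirror-symmetric form of~\eqref{eq:sBfloor} (largest power of $\Delta^2$ below $\beta$, all inequalities reversed); together these give $|\fd(\beta)|\le -\chi_4(\widehat\beta)+n$.

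The remaining task, and the step I expect to be the main obstacle, is~\eqref{eq:sBfloor}. Put $k:=\lfloor\beta\rfloor$; there is nothing to prove if $k\le 0$, so let $k\ge 1$. Then $\Delta^{2k}\preceq\beta$, i.e. $\alpha:=\Delta^{-2k}\beta$ is Dehornoy-nonnegative ($\alpha\succeq 1$), and so is $\Delta^{2j}\alpha$ for each $j\ge 0$, since the Dehornoy-nonnegative braids form a submonoid. I would now prove the one-twist inequality
\[
\chi_4\bigl(\widehat{\Delta^2\gamma}\bigr)\le\chi_4(\widehat\gamma)-1\qquad\text{whenever }\gamma\succeq 1,
\]
and iterate it along $\gamma=\Delta^{2(k-1)}\alpha,\dots,\Delta^{0}\alpha$ to obtain $\chi_4(\widehat\beta)\le\chi_4(\widehat\alpha)-k\le n-k$, which is exactly~\eqref{eq:sBfloor}; note the last step uses only the trivial bound $\chi_4(\widehat\alpha)\le(\#\text{components})\le n$, so the whole difficulty is concentrated in the one-twist inequality. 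When $\gamma$ is an honest positive braid it is immediate, since then $\widehat{\Delta^2\gamma}$ is a positive braid closure and $\chi_4(\widehat{\Delta^2\gamma})=n-\wr(\Delta^2\gamma)=\chi_4(\widehat\gamma)-n(n-1)$; the content is the passage from positive braids to merely Dehornoy-nonnegative ones. I would handle this by a cobordism argument: $\widehat{\Delta^2\gamma}$ arises from $\widehat\gamma$ by inserting the full twist — a collection of $n(n-1)$ positive bands — and Dehornoy-nonnegativity of $\gamma$ lets one route a $\chi_4$-maximizing surface for $\widehat\gamma$ through the twist region so that at most $n(n-1)-1$ of the inserted bands can be compressed away; this is precisely the kind of bookkeeping the unified framework of the appendix is designed to carry out, namely an adjunction / slice-Bennequin estimate applied after a controlled modification of $\gamma$.

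A final remark on why no special cases intrude. The homogenization step absorbs the awkward range $\lfloor\beta\rfloor=0<\fd(\beta)<1$: replacing $\beta$ by a high power $\beta^m$ forces $\lfloor\beta^m\rfloor\ge 1$, so one never needs a separate analysis of near-trivial braids — in particular of braids whose closure is a slice link with $n$ components, for which $g$ vanishes. Likewise, the only input used from the FDTC is its homogeneity together with Malyutin's one-sided comparison $\fd\le\lfloor\cdot\rfloor+1$; the substance of Theorem~\ref{thm:sBineq} lies entirely in~\eqref{eq:sBfloor}, a statement purely about the slice Euler characteristic and full twists.
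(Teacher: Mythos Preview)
Your reduction via homogenization is sound: subadditivity of $g(\beta)=-\chi_4(\widehat\beta)+n$ together with Malyutin's comparison does reduce Theorem~\ref{thm:sBineq} to the Dehornoy-floor inequality $\lfloor\beta\rfloor\le g(\beta)$, and that in turn to your one-twist inequality $\chi_4(\widehat{\Delta^2\gamma})\le\chi_4(\widehat\gamma)-1$ for Dehornoy-nonnegative~$\gamma$. But you have not proved the one-twist inequality, and your sketch does not constitute an argument. The assertion that ``Dehornoy-nonnegativity of $\gamma$ lets one route a $\chi_4$-maximizing surface through the twist region so that at most $n(n-1)-1$ of the inserted bands can be compressed away'' has no content: Dehornoy-positivity is a combinatorial condition on braid words and carries no known direct control over slice surfaces in $B^4$. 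Your appeal to the appendix is also misplaced---the appendix establishes slice-Bennequin inequalities for homogenizations of \emph{concordance homomorphisms} such as $\Upsilon_t/t$, not for the Dehornoy floor; the only way the paper (or its appendix) relates $\fd$ to $\chi_4$ at all is precisely by first identifying $\fd$ with a linear combination of $\widetilde\Upsilon$ and $\wr$. So the one-twist inequality is not bookkeeping: it carries the full content of the theorem, and your argument has relocated the difficulty without resolving it. In fact the one-twist inequality is a priori \emph{stronger} than Theorem~\ref{thm:sBineq}: for $\gamma\succeq 1$ the theorem only yields $-\chi_4(\widehat{\Delta^2\gamma})+n\ge 1+\fd(\gamma)$, not $\ge 1+g(\gamma)$, so you have reduced the statement to a harder one.

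The paper's proof avoids this by invoking the identity $\fd(\beta)=\widetilde\Upsilon_\beta(t)/t+\wr(\beta)/2$ for $t=2/(n-1)$, bounding the $\Upsilon$-term via $\Upsilon_K(t)\le tg_4(K)$ and the writhe term via the classical slice-Bennequin inequality~\eqref{eq:sBineqwr}; elementary cobordisms of the kind you describe enter only to pass from $g_4(\widehat{\beta^{nk}\delta})$ to $-\chi_4(\widehat\beta)$. As Section~\ref{sec:context} makes explicit, reduction-to-positive-braid strategies fail for the FDTC, and no purely cobordism-theoretic substitute for the $\Upsilon$-input is known.
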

Theorem~\ref{thm:sBineq} provides the affirmative answer claimed above, since it
 reads
$|\fd(\beta)|\leq 2g_4\left(\widehat{\beta}\right)+n-1$ for all $\beta\in B_n$ such that $\widehat{\beta}$ is a knot.

While the result also holds for $n=2$, we do not include it in the statement since, for $\beta\in B_2$, one has
$\left|2\fd(\beta)\right|=
\left|\wr(\beta)\right|
\overset{\text{\eqref{eq:sBineqwr}}}{\leq}{-\chi_4\left(\widehat{\beta}\right)+n}$, where \eqref{eq:sBineqwr} is only applied for the rather elementary case of closures of elements in $B_2$.


We provide more context for Theorem~\ref{thm:sBineq} in Section~\ref{sec:context}.
The main input in the proof of Theorem~\ref{thm:sBineq} is that the FDTC can be expressed in terms of the so-called homogenization of an instance of Upsilon. Here Upsilon is a knot invariant, introduced in~\cite{OSS_2014}, that has being a lower bound for the slice genus as a key feature; see Section~\ref{sec:proofsBi}.


\subsection*{Comparing bounds on the defect and affine linear bounds for the slice genus}
At this point the reader may have wondered why Theorem~\ref{thm:CharPropForFDTC} and Theorem~\ref{thm:sBineq} appear in the same text, given they are concerned with different aspects of the FDTC. A link between these aspects 
is provided in Proposition~\ref{prop:Df<=A(n-1)} below.

We ask whether satisfying the slice-Bennequin inequality 
as described in Theorem~\ref{thm:sBineq} characterizes the FDTC in the same way that having defect 1 does characterize the FDTC by 
Theorem~\ref{thm:CharPropForFDTC}.
\begin{Question}\label{q:CharFDTCviasB}Fix an integer $n\geq 3$.
Is the FDTC the unique homogeneous quasimorphism $\fd\colon B_n\to \R$ that satisfies $\fd\left(\Delta^2\right)=1$, $\fd(\beta)=0$ for all $\beta \in B_{n-1}\subset B_n$, and for which there exists a constant $C$ such that
\[|\fd(\beta)|\leq 2g_4\left(\widehat{\beta}\right)+C
\text{ for all $\beta\in B_n$ for which $\widehat{\beta}$ is a knot}
?\]
\end{Question}
While we are unable to answer this question, we provide the following connection between the defect of a quasimorphism $f$ and the possible slopes of affine linear bounds for $g_4\left(\widehat{\beta}\right)$ in terms of $f(\beta)$. 
\begin{prop}\label{prop:Df<=A(n-1)} Fix $n\geq 3$ and 
let $f\colon B_n\to\R$ be a homogeneous quasimorphism.
 If there exist constants $A,C\in\R$ such that
\[\left|f(\beta)\right|\leq Ag_4\left(\widehat{\beta}\right)+C\text{ for all $\beta\in B_n$ with closure a knot},
\]
then the defect $D_f$ of $f$ satisfies $
D_f\leq \frac{A}{2}(n-1)$.
\end{prop}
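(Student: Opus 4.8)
The plan is to exploit the defect formula for homogeneous quasimorphisms together with the slice-Bennequin-type bound applied to a cleverly chosen family of braids, namely powers of the full twist times products of commutators. Recall the standard fact that for a homogeneous quasimorphism $f$ on a group $G$, one has $D_f = \sup\{|f([a,b])| : a,b\in G\}$, where $[a,b]=aba^{-1}b^{-1}$; more useful here is the quantitative version $|f(ab) - f(a) - f(b)| \le D_f$ combined with the observation that $f$ is conjugation-invariant, so $f(\beta \gamma \beta^{-1}\gamma^{-1})$ is controlled by $2D_f$ while $f(\beta\gamma)$ and $f(\beta)+f(\gamma)$ differ by at most $D_f$. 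The key point is that conjugate braids have conjugate (hence isotopic) closures, so $g_4(\widehat{\beta})$ is a conjugacy invariant, and the full twist $\Delta^2$ is central, so multiplying a braid by $\Delta^{2k}$ shifts $f$ by $k$ (using homogeneity and the fact that $f$ of a central element behaves additively).

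First I would fix arbitrary $a,b\in B_n$ with $f(a),f(b)$ chosen to (nearly) realize a large value of $|f([a,b])|$ — more precisely, I would pick $a,b$ so that $|f([a,b])|$ is close to $D_f$. Then I would consider, for large positive integers $k$, the element $\beta_k := \Delta^{2k}[a,b]^m$ for a suitable auxiliary exponent $m$ (possibly just $m=1$, with $k\to\infty$ doing the work). By centrality of $\Delta^2$ and homogeneity, $f(\beta_k)$ is approximately $k + m\,f([a,b])$ up to an error bounded in terms of $D_f$ and $m$ but independent of $k$. On the geometric side, the closure $\widehat{\beta_k}$ is the closure of $[a,b]$ with $k$ extra full twists inserted; its slice genus grows at most linearly in $k$, and crucially the linear rate is governed by $n$: adding one full twist $\Delta^2$ on $n$ strands changes the writhe by $n(n-1)$ and, via the ordinary slice-Bennequin inequality \eqref{eq:sBineqwr}, changes $-\chi_4$ (hence $2g_4$ up to bounded error) by at most $n(n-1)$. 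I would need to be a little careful to ensure $\widehat{\beta_k}$ is a knot — this can be arranged by replacing $[a,b]$ by a conjugate or by absorbing a fixed braid word whose closure-with-twists is connected, changing $f$ by a bounded amount.

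Feeding $\beta = \beta_k$ into the hypothesis $|f(\beta)| \le A g_4(\widehat{\beta}) + C$ and letting $k\to\infty$, the $k$-linear terms must satisfy the inequality: the left side grows like $k$, while the right side grows like $A\cdot \tfrac{1}{2} n(n-1)\cdot k$ — wait, that would give $D_f$-independent information, so the actual mechanism must be subtler. The correct setup is instead to compare $\beta_k := \Delta^{2k}$ against products where the commutator is \emph{iterated}: take $\gamma_N$ to be a product of $N$ conjugates of $[a,b]$ arranged so that $f(\gamma_N) \approx N f([a,b]) \approx N D_f$ while $\widehat{\gamma_N}$ is the closure of a braid on $n$ strands with bounded-per-factor writhe, so that $g_4(\widehat{\gamma_N}) \le \tfrac{1}{2}(W N + n - 1)$ for some constant $W$ depending on $a,b$ but $-$ and this is the crux $-$ the slice-Bennequin slack, namely $Ag_4(\widehat{\gamma_N}) - |f(\gamma_N)|$, must stay $\ge -C$. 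Optimizing the choice of $a,b$ (or rather, passing to the stable commutator length / scl picture) converts the constant $W$ into exactly $(n-1)$ times the relevant quantity, yielding $D_f \le \tfrac{A}{2}(n-1)$.

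The main obstacle will be pinning down this last optimization cleanly: one must show that the ``worst'' commutators $[a,b]$ realizing the defect can be taken to have closures whose slice genus grows at the controlled rate $\tfrac{1}{2}(n-1)$ per unit of $f$-value. I expect the cleanest route is via the homogeneous quasimorphism / scl duality (Bavard duality): $D_f = 2\,\mathrm{scl}$-type expression, and for the braid group the relevant ``$\mathrm{scl}$ versus genus'' comparison is governed by the Euler characteristic bound $-\chi_4(\widehat{\beta}) \le \mathrm{(number\ of\ strands)} - 1$ for a braid representing a boundary-like class — i.e., precisely the $n$ in \eqref{eq:sBineqwr}. So I would: (1) express $D_f$ via Bavard duality in terms of values of $f$ on products of commutators; (2) for each such product choose a braid representative on $n$ strands; (3) apply the hypothesis to that representative and its powers, dividing by the power and sending it to infinity; (4) use that the writhe contribution is controlled and the $+n$ term is negligible in the limit, extracting $D_f \le \tfrac{A}{2}(n-1)$. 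The genuinely delicate step is (2)–(3): making sure the braid representatives stay on $n$ strands (not more) while faithfully carrying the commutator structure, which is where the constant $n-1$ rather than something larger comes from.
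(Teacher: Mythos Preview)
Your proposal circles the right target---you correctly sense that $D_f$ should be realized on products of commutators and that the factor $\tfrac{1}{2}$ comes from stable commutator length (indeed, the paper explicitly invokes that $(\alpha\beta)^k\alpha^{-k}\beta^{-k}$ is a product of $k/2$ commutators of pure braids, citing the scl bound for a commutator). But there is a genuine gap at the geometric step: you never produce a concrete upper bound on $g_4$ of the closure of a product of commutators. Your claim that $g_4(\widehat{\gamma_N}) \le \tfrac12(WN + n-1)$ with $W$ depending on $a,b$ is asserted without justification, and the subsequent line that ``optimizing the choice of $a,b$ converts $W$ into exactly $(n-1)$'' is wishful. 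Your later attempt to extract this from ``$-\chi_4(\widehat\beta)\le n-1$ for a braid representing a boundary-like class'' is not a true inequality: closures of $n$-braids have unbounded slice genus, and the slice-Bennequin inequality~\eqref{eq:sBineqwr} gives a \emph{lower} bound on $-\chi_4$, not an upper bound.

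What you are missing is a cobordism lemma: if $\alpha,\beta$ are pure $n$-braids, then for any $b\in B_n$ the closures of $[\alpha,\beta]b$ and $b$ are related by a cobordism built from exactly $2(n-1)$ $1$-handles (this is Lemma~\ref{lem:cobordisms}\eqref{item:cobb} in the paper, proved by a direct diagrammatic argument). Iterating this over $k/2$ commutators turns the closure of $(\alpha\beta)^k\alpha^{-k}\beta^{-k}\delta$ into the unknot $\widehat\delta$ at a cost of genus $\tfrac{k}{2}(n-1)$, and \emph{that} is where the constant $(n-1)$ enters. Without this lemma (or an equivalent), your Bavard-duality outline cannot be completed: scl alone knows nothing about $4$-ball genus, and the bridge between them is precisely this explicit handle count per commutator on $n$ strands. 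You also need to arrange that all braids involved are pure (so closures are knots and the cobordism lemma applies); the paper handles this by passing from $\alpha',\beta'$ to $\alpha=(\alpha')^n,\beta=(\beta')^n$ at the outset.
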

In a first version of this article, the conclusion of Proposition~\ref{prop:Df<=A(n-1)} stated  $D_f\leq A(n-1)
$. The factor $1/2$ improvement
was pointed out by Tetsuya Ito.

Proposition~\ref{prop:Df<=A(n-1)} can be understood as a first step towards affirmatively answering Question~\ref{q:CharFDTCviasB}. Concretely, we have the following corollary.

\begin{corollary}\label{cor:defectvssB} Fix $n\geq 3$ and 
let $f\colon B_n\to\R$ be a homogeneous quasimorphism.
 that satisfies $f\left(B_{n-1}\right)=\{0\}$ and $f(\Delta^2)=1$.
 If there exist constants $A,C\in\R$ such that
$\left|f(\beta)\right|\leq Ag_4\left(\widehat{\beta}\right)+C\text{ for all $\beta\in B_n$ with closure a knot},
$
then $A>\frac{2}{n-1}$.
\end{corollary}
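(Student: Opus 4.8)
The plan is to combine Proposition~\ref{prop:Df<=A(n-1)} with the lower bound on the defect of the FDTC referenced in the excerpt (Lemma~\ref{lemma:qmhaveD>=1}), which asserts that any homogeneous quasimorphism on $B_n$ with $n\geq 3$ satisfying the two normalization conditions $f(B_{n-1})=\{0\}$ and $f(\Delta^2)=1$ has defect $D_f\geq 1$. Indeed, by hypothesis $f$ is a homogeneous quasimorphism with $f(B_{n-1})=\{0\}$ and $f(\Delta^2)=1$, so this lemma gives $D_f\geq 1$. On the other hand, the affine linear bound $|f(\beta)|\leq Ag_4(\widehat\beta)+C$ is exactly the hypothesis of Proposition~\ref{prop:Df<=A(n-1)}, so that proposition yields $D_f\leq \tfrac{A}{2}(n-1)$.

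Chaining these two inequalities gives $1\leq D_f\leq \tfrac{A}{2}(n-1)$, hence $A\geq \tfrac{2}{n-1}$. To upgrade the weak inequality $A\geq \tfrac{2}{n-1}$ to the strict inequality $A>\tfrac{2}{n-1}$ claimed in the corollary, I would argue that equality $A=\tfrac{2}{n-1}$ cannot occur. One clean way: if $A=\tfrac{2}{n-1}$ then necessarily $D_f=1$ and equality holds throughout Proposition~\ref{prop:Df<=A(n-1)}; but the affine bound is a bound for \emph{all} $\beta$ with closure a knot with a \emph{fixed} constant $C$, and one can always slightly decrease the allowed slope at the cost of enlarging $C$ only if the bound were not tight in a limiting sense — more robustly, one observes that since $g_4$ takes values in $\tfrac12\Z_{\geq 0}$ and $f$ takes real values, replacing $A$ by $A' = A$ and noting that for any $\epsilon>0$ the bound with slope $A-\epsilon$ would still need to be checked.

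Actually the simplest route to strictness is this: if we had $A=\tfrac{2}{n-1}$, then since the hypothesis only requires $|f(\beta)|\leq Ag_4(\widehat\beta)+C$ with some finite $C$, we could apply Proposition~\ref{prop:Df<=A(n-1)} with this same $A$ to conclude $D_f\leq \tfrac{A}{2}(n-1)=1$, forcing $D_f=1$; but then any homogeneous quasimorphism of defect exactly $1$ satisfying the normalizations is, by Theorem~\ref{thm:CharPropForFDTC}, equal to the FDTC $\fd$ itself. So the equality case would force $f=\fd$, and it remains to rule out that $\fd$ satisfies the slice-Bennequin-type inequality with the critical slope $A=\tfrac{2}{n-1}$. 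This is where the examples of~\cite[Prop.~4.7]{HKKMPRTT} enter: they exhibit a family of braids whose closures are knots for which the ratio $\fd(\beta)/g_4(\widehat\beta)$ tends to $2$ (which is why $A(n)=2$ is optimal in Theorem~\ref{thm:sBineq}), and since $2 > \tfrac{2}{n-1}$ for $n\geq 3$, no affine linear bound with slope $\tfrac{2}{n-1}$ can hold for $\fd$. Hence $f=\fd$ is impossible under the assumption $A=\tfrac{2}{n-1}$, so $A>\tfrac{2}{n-1}$, as claimed.

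The main obstacle is the strictness: the weak inequality $A\geq \tfrac2{n-1}$ is immediate from the two cited results, but promoting it to a strict inequality requires invoking the optimality of the slope $2$ for the FDTC via the explicit examples, together with the uniqueness in Theorem~\ref{thm:CharPropForFDTC} to pin down that the equality case forces $f=\fd$. An alternative, perhaps cleaner, route avoiding Theorem~\ref{thm:CharPropForFDTC}: show directly that the inequality in Proposition~\ref{prop:Df<=A(n-1)} is strict whenever $f$ is nonzero — i.e.\ that a homogeneous quasimorphism satisfying an affine slice-genus bound with slope $A$ in fact has $D_f<\tfrac A2(n-1)$ — which would give $A>\tfrac2{n-1}$ at once from $D_f\geq 1$; however, establishing this strict version would require reexamining the proof of Proposition~\ref{prop:Df<=A(n-1)} rather than using it as a black box, so I would fall back on the examples-based argument above.
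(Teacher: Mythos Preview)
Your proposal is correct and ultimately follows the same route as the paper: Proposition~\ref{prop:Df<=A(n-1)} gives $D_f\leq \tfrac{A}{2}(n-1)$, Theorem~\ref{thm:CharPropForFDTC} forces $f=\fd$ once $D_f\leq 1$, and the examples from~\cite[Prop.~4.7]{HKKMPRTT} rule this out since they force $A\geq 2$. The only difference is that the paper argues directly by contradiction from $A\leq \tfrac{2}{n-1}$, whereas you first pass through Lemma~\ref{lemma:qmhaveD>=1} to get the weak inequality $A\geq \tfrac{2}{n-1}$ and then handle strictness separately; that detour through Lemma~\ref{lemma:qmhaveD>=1} is harmless but redundant, since Theorem~\ref{thm:CharPropForFDTC} only needs $D_f\leq 1$, not $D_f=1$.
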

\begin{proof}[Proof of Corollary~\ref{cor:defectvssB}] Assume towards a contradiction that $A\leq \frac{2}{n-1}$. Then $D_f\leq \frac{A}{2}(n-1)\leq 1$ by Proposition~\ref{prop:Df<=A(n-1)}, hence $f=\fd$ by Theorem~\ref{thm:CharPropForFDTC}. However, for $f=\fd$, we have $A\geq 2$, e.g.~by the examples from~\cite[Prop.~4.7]{HKKMPRTT}.
\end{proof}
We do not expect Proposition~\ref{prop:Df<=A(n-1)} to be optimal. If the inequality for $D_f$ in Proposition~\ref{prop:Df<=A(n-1)} can be strengthened, concretely, if the next question can be answered affirmatively, then Theorem~\ref{thm:CharPropForFDTC} implies that Question~\ref{q:CharFDTCviasB} can be answered affirmatively (by arguing as in the proof of Corollary~\ref{cor:defectvssB}).
\begin{Question}\label{q:Df<=A/2} Fix $n\geq 3$ and 
let $f\colon B_n\to\R$ be a homogeneous quasimorphism
 that satisfies $f\left(B_{n-1}\right)=\{0\}$. If there exist constants $A,C\in\R$ such that
\[\left|f(\beta)\right|\leq Ag_4\left(\widehat{\beta}\right)+C\text{ for all $\beta\in B_n$ with closure a knot},
\]
does the defect $D_f$ of $f$ satisfies $D_f\leq A/2$?
\end{Question}

\subsection*{Ingredients for the proofs and structure of the paper}

In Section~\ref{sec:context}, we provide context for Theorem~\ref{thm:sBineq}.

In Section~\ref{sec:ProofThm1} we establish Theorem~\ref{thm:CharPropForFDTC}. The main step in the proof of Theorem~\ref{thm:CharPropForFDTC} is to show that every braid $\beta\in B_n$ that can be written as a braid word that contains at most $l$ occurrences of $a_1$ and no $a_1^{-1}$, can be decomposed as a particular product of full twists $\Delta^2$ and at most $l$ braids that are conjugate to braids in $B_{n-1}\subset B_n$. 

In Section~\ref{sec:BF}, we show that Proposition~\ref{prop:uncqms} is a rather immediate consequence of  the work on group actions on $\delta$-hyperbolic spaces that satisfy WPD (weak proper discontinuity)~\cite{BestvinaFujiwara_02}. 

In Section~\ref{sec:proofsBi}, we establish Theorem~\ref{thm:sBineq}. A key ingredient is the reinterpretation of the FDTC as a linear combination of the writhe and the homogenization of $\Upsilon(t)$~\cite[Theorem~1.3]{feller_hubbard}. Additionally, we use some facts about concordances between braid closures. The latter is also what we use in the proof of Proposition~\ref{prop:Df<=A(n-1)}.

We conclude the paper with a perspective that allows to view both the slice-Bennequin~\eqref{eq:sBineqwr} and Theorem~\ref{thm:sBineq} as instances of an observation concerning the homogenization of concordance homomorphisms; see Appendix~\ref{ap:sBiforI}.


\subsection*{Acknowledgements} The author thanks Jonathan Bowden, Francesco Fournier-Facio, and Alessandro Sisto for helpful conversations concerning the construction of quasimorphisms via actions on $\delta$-hyperbolic spaces that satisfy WPD. The author also thanks Tetsuya Ito for a stimulating exchange and an improvement concerning Proposition~\ref{prop:Df<=A(n-1)}.
Finally, the author gratefully acknowledges support by the SNSF Grant~181199.

\section{Context for Theorem~\ref{thm:sBineq}: Bennequin and slice-Bennequin inequalities}\label{sec:context}
For simplicity of exposition, in this section all braids $\beta$ are assumed to have a knot as their closure. All that is said translates to the general setup if $g_k$ is replaced by $-(\chi_k-1)/2$ for $k\in\{3,4\}$.

\subsection*{The Bennequin inequality}
The slice-Bennequin inequality
\[\wr(\beta)\leq 2g_4\left(\widehat{\beta}\right)+n-1\text{ for all }\beta\in B_n\text{ \cite{rudolph_QPasObstruction,KronheimerMrowka_GenusofEmb}},\]
was pre-dated by the Bennequin inequality
\[\wr(\beta)\leq 2g_3\left(\widehat{\beta}\right)+n-1\text{ for all }\beta\in B_n \text{ \cite{Bennequin_Entrelacements}},\]
where $g_3$ denotes the smallest genus among smooth surface in $S^3$ with boundary the knot.
There is a conceptual gap between these two results. The slice-Bennequin inequality needed a strong input from smooth $4$-manifold theory the so-called local-Thom conjecture as proven by Kronheimer and Mrowka~\cite{KronheimerMrowka_GenusofEmb}. The `smooth' is crucially here. Indeed, the analog statement in the locally-flat setting (where $g_4$ is replaced with the topological slice genus) is well-known to fail in many instances; see e.g.~\cite{rudolph_QPasObstruction,Rudolph_84_SomeTopLocFlatSurf,BaaderFellerLewarkLiechti_15}.

Concerning the FDTC, we point to a version of the Bennequin inequality by Ito~\cite[Theorem~1.2]{Ito_11}:
$\fd(\beta)< 2g_3\left(\widehat{\beta}\right)\frac{2}{n+2}-\frac{2}{n+2}+\frac{3}{2}$ for all $\beta\in B_n$.
\footnote{We note that \cite[Theorem~1.2]{Ito_11} is phrased for the Dehornoy floor (compare Section~\ref{sec:ProofThm1}). However, since the argument only uses properties of the Dehornoy floor that are also satisfied by the FDTC, it translates to a statement about the FDTC.}
Meaning that the Bennequin inequality holds with a slope $A(n)$ for which it is known not to hold when $g_3$ is replaced by $g_4$.

\subsection*{The slice-Bennequin inequality for (quasi-)positive braids}

Recall that the key input for Rudolph's proof of the slice-Bennequin inequality~\eqref{eq:sBineqwr} is that it holds (in fact with equality) for positive braids with closure a torus knot (by the local Thom conjecture~\cite{KronheimerMrowka_GenusofEmb}) and hence, as observed by Rudolph, also for positive (actually also quasipositive) braids.
Then~\eqref{eq:sBineqwr} follows using that, for all $\beta\in B_n$ and generators $a_i$, 
$\wr(\beta a_i)-\wr(\beta)\geq 1$ and there exists a cobordism with Euler characteristic $-1$ between the closures of $\beta a_i$ and $\beta$.
For the FDTC,
combining $\fd(\beta)\leq \wr(\beta)-1$ for quasipositive braids $\beta\neq 1$
(this follows readily by using that $\omega(a_i)=0$, which is implied by $\fd(B_n-1)=\{0\}$, and that $\fd$ is homogeneous quasimorphism of defect at most 1)
with~\eqref{eq:sBineqwr} yields
\begin{equation}\label{eq:sliceBennequinfdtcqp}
\fd(\beta)\leq -\chi_4\left(\widehat{\beta}\right)+n-1\quad \text{for all quasipositive $\beta\neq 1$ in $B_n$;}
\end{equation} see \cite[Theorem~1.5]{HKKMPRTT}. However, the strategy of reducing to the statement for positive (or quasipositive) braids cannot be carried over to establish the slice-Bennequin inequality for the FDTC since $\fd(\beta a_i)-\fd(\beta)\geq 1$ does not hold in general. 
Also, 
there is no bound on $\fd(\beta)$ in terms of an expression depending on $\wr(\beta)$ that holds for all braids $\beta\in B_n$.


\subsection*{Does a version of the slice-Bennequin inequality hold for all quasimorphisms?}
In light of the writhe and FDTC satisfying the slice-Bennequin inequality, one might wonder whether,
for each braid group $B_n$ with $n\geq 3$, there exists a quasimorphism $f\colon B_n\to\R$ which does not satisfy a version of the slice-Bennequin inequality. We state this precisely.
\begin{question}\label{q:qmwithoutSBI}
Fix $n\geq 3$. Does there exist a quasimorphism $f\colon B_n\to\R$ such that, for every $A,C\in \R$, there exists a $\beta\in B_n$ with closure a knot such that $|f(\beta)|>Ag_4\left(\widehat{\beta}\right)+C$?
\end{question}
We note that since every quasimorphism is at bounded distance of a homogeneous one, asking the question for homogeneous quasimorphisms or quasimorphisms amounts to the same.

We suspect that many (if not most) quasimorphisms do not satisfy a version of the slice-Bennequin inequality; why, after all, would there be such a connection between quasimorphisms and concordance.
For example, we suspect that many Brooks-like quasimorphisms as provided in~\cite{BestvinaFujiwara_02} (a construction that, as far as we know, is devoid of connections to concordance) are quasimorphisms as asked for in Question~\ref{q:qmwithoutSBI}. However, we are not able to confirm this at this point.

To answer Question~\ref{q:qmwithoutSBI} in the positive, the reader might be tempted (the author certainly was) to construct a homogeneous quasimorphism $f\colon B_n\to \R$ that is non-zero on some braid $\beta$ with the property that $\lim_{k\to\infty} \frac{\chi_4\left(\widehat{\beta^k}\right)}{k}=0$.
Indeed, one readily checks that such an $f$ is as asked for in Question~\ref{q:qmwithoutSBI}. However, the only braids we are able to find with the property $\lim_{k\to\infty} \frac{\chi_4\left(\widehat{\beta^k}\right)}{k}=0$
are braids that are (up to taking conjugates) of the form $\alpha\overline{\alpha^{-1}}$ for some braid $\alpha$, and a small calculation reveals that, for all $\alpha\in B_n$, all homogeneous quasimorphisms $f\colon B_n\to\R$ vanish on $\alpha\overline{\alpha^{-1}}$.
(Here, $\overline{\gamma}\in B_n$ is the result of changing all $a_i^{\pm 1}$ in a braid word for $\gamma\in B_n$ to $a_{n-i}^{\pm 1}$.)
\section[The proof of Theorem~\ref{thm:CharPropForFDTC}]
{The proof of Theorem~\ref{thm:CharPropForFDTC}}\label{sec:ProofThm1}
\subsection*{Definition of the FDTC via the Dehornoy order} We fix an integer $n\geq 2$.
A braid $\beta$ is said to be \emph{Dehornoy positive}, denoted by $\beta\succ_\textrm{Deh} 1$, if it can be written as a braid word that, for some integer $1\leq i<n$, contains a braid generator $a_i$ but no $a_i^{-1}$ or any generators $a_j^{\pm 1}$ for $j<i$. We write $\beta\succeq_\textrm{Deh} 1$ if $\beta\succ_\textrm{Deh} 1$ or $\beta=1$. Dehornoy showed that this gives a well-defined left-invariant total order $\succeq_\textrm{Deh}$ on $B_n$ by setting $\beta\succeq_\textrm{Deh}\alpha$ to mean $\alpha^{-1}\beta\succeq_\textrm{Deh}1$~\cite{Dehornoy_94_Braidgroups}.
The \emph{Dehornoy floor} $\lfloor\beta\rfloor$ is the unique integer $m$ such that $(\Delta^2)^{m+1}\succ_\textrm{Deh}\beta\succeq_\textrm{Deh}(\Delta^2)^{m}$. For any $\beta\in B_n$, its \emph{fractional Dehn twist coefficient} is $\omega(\beta)\coloneqq\lim_{k\to\infty}\frac{\lfloor\beta^k\rfloor}{k}$; see~\cite{Malyutin_Twistnumber}. In other words, $\fd$ equals the homogenization of the Dehornoy floor. We refer to~\cite{Malyutin_Twistnumber} for more details on this approach to the FDTC and how one derives its properties (e.g.~being a homogeneous quasimorphism with defect at most~1).

\begin{Remark}\label{rmk:fd>0impliesDpostive}
It is essentially immediate from this definition that, if $\fd(\beta)>0$, then $\beta$ can be written as a braid word with at least one $a_1$ and no $a_1^{-1}$ (and, in particular, $\beta\succ 1$).
Indeed, if $\beta$ can be written as a braid word without $a_1$ or $a_1^{-1}$,
then $(\Delta^2)\succ_\textrm{Deh}\beta\succ_\textrm{Deh}(\Delta^2)^{-1}$. Hence $1\geq \lfloor\beta^k\rfloor\geq -1$, which implies $\fd(\beta)=0$. Therefore, $\beta$ can be written as braid word that either contains only $a_1$ or only $a_1^{-1}$. If it were the latter, then $1\succ_\textrm{Deh}\beta^k$, hence $0\geq\lfloor\beta^k\rfloor$, which implies
$0\geq\fd(\beta)$. 
\end{Remark}

\subsection*{Proof of Theorem~\ref{thm:CharPropForFDTC}}
The proof below 
uses relations in the braid group, general properties of homogeneous quasimorphisms, and the property of the FDTC discussed in Remark~\ref{rmk:fd>0impliesDpostive}.  


\begin{proof}[Proof of Theorem~\ref{thm:CharPropForFDTC}]
We fix $n\geq 3$, and 
let $\d\colon B_n\to \R$ be any homogeneous quasimorphism that satisfies the assumptions.

Assume towards a contradiction that $\d\neq\fd$. Pick $\beta_\textrm{w}\in B_n$ with $\fd(\beta_w)-\d(\beta_w)\neq 0$.
Since $\d$ and $\fd$ are homogeneous, there exists 
$k_1\in\Z$ such that
\[\fd\left(\beta_\textrm{w}^{k_1}\right)-\d\left(\beta_\textrm{w}^{k_1}\right)=k_1(\fd(\beta_w)-\d(\beta_w))>1.\]
Since $f(ab)=f(a)+f(b)$ for all homogeneous quasimorphisms $f\colon G\to \R$ and commuting $a,b\in G$, and since $\fd\left(\Delta^2\right)=\d\left(\Delta^2\right)=1$, there exists $k_2\in\Z$
such that \[\fd\left(\beta_\textrm{w}^{k_1}(\Delta^2)^{k_2}\right)=\fd\left(\beta_\textrm{w}^{k_1})+k_2>0>\d(\beta_\textrm{w}^{k_1}\right)+k_2=\d\left(\beta_\textrm{w}^{k_1}(\Delta^2)^{k_2}\right).\]
We define $\beta\coloneqq \beta_\textrm{w}^{k_1}(\Delta^2)^{k_2}$. Since $\fd(\beta)>0$,
%
we have that $\beta$ can be given by a braid word with no occurrences of $a_1^{-1}$ but at least one $a_1$ by Remark~\ref{rmk:fd>0impliesDpostive}.

We proceed by showing $\d(\beta)\geq 0$, which contradicts $\d(\beta)<0$.
We may and do assume that the number of occurrences of $a_1$ in the braid word without $a_1^{-1}$ we picked for $\beta$ is even. (Indeed, otherwise we consider $\beta\beta$, which also satisfies $0>\d(\beta\beta)$ since $\d(\beta\beta)=\d(\beta)+\d(\beta)$.)
Hence we have that
\[\beta=\prod_{i=1}^{2l}(a_1\beta_i),\] where $l$ is a positive integer and the $\beta_i$ are (possibly trivial) $n$-braids in $B_{n-1}\subset B_n$.

Next we observe that $\beta$ maybe conjugated to a braid of the form $\Delta^{2l}\prod_{i=1}^{l}L_{i}R_{i}$, where $R_i$ is an $n$-braid given by a braid word without $a_1^{\pm1}$ (in other words an element of $B_{n-1}\subset B_n$) and $L_i$ an $n$-braid given by a braid word without $a_{n-1}^{\pm1}$. To describe $L_i$ and $R_i$, we consider the element
\[\Delta\coloneqq\prod_{i=1}^{n-1}a_1a_2\cdots a_{n-i}=\prod_{i=1}^{n-1}a_{n-1}a_{n-2}\cdots a_{i+1}a_{i}\in B_n,\] known as the (positive) half-twist since $\Delta\Delta=\Delta^2$. We denote by $\Delta_L\in B_n$ and $\Delta_R\in B_n$ the half-twist on the first $n-1$ strands and the last $n-1$ strands, respectively. In other words, $\Delta_R$ is the image of the half twist $\Delta\in B_{n-1}$ under the inclusion $\iota\colon B_{n-1}\to B_n,a_i\mapsto a_{i+1}$, while $\Delta_L$ is the image of the half twist $\Delta\in B_{n-1}$ under the inclusion $a_i\mapsto a_i$. We also denote by $\overline{\beta}$ the braid obtained from $\beta$ by replacing $a_i^{\pm1}$ with $a_{n-i}^{\pm1}$, and recall that $\Delta^{\pm1}\beta=\overline{\beta}\Delta^{\pm1}$. With this we see
\begin{align*}
a_1\beta_{2i-1}a_1\beta_{2i}&=\Delta^2\Delta^{-2}a_1\beta_{2i-1}a_1\beta_{2i}\\
&=\Delta^2\underbrace{\Delta^{-1}}_{\hspace{-2cm}\Delta_R^{-1}\left(a_1^{-1}a_2^{-1}\cdots a_{n-2}^{-1}a_{n-1}^{-1}\right)\hspace{-2cm}}
a_{n-1}\overline{\beta_{2i-1}}\overbrace{\Delta^{-1}}^{\hspace{-2cm}\Delta_L^{-1}\left(a_{n-1}^{-1}a_{n-2}^{-1}\cdots a_{2}^{-1}a_1^{-1}\right)\hspace{-2cm}}a_1\beta_{2i}\\
&=\Delta^2\Delta_R^{-1}\left(a_1^{-1}a_2^{-1}\cdots a_{n-2}^{-1}\right)\overline{\beta_{2i-1}}\Delta_L^{-1}\left(a_{n-1}^{-1}a_{n-2}^{-1}\cdots a_{2}^{-1}\right)\beta_{2i}.
\end{align*}
Hence,
\begin{align*}
\beta&=\prod_{i=1}^{l}\Delta^2\Delta_R^{-1}\left(a_1^{-1}a_2^{-1}\cdots a_{n-2}^{-1}\right)\overline{\beta_{2i-1}}\Delta_L^{-1}\left(a_{n-1}^{-1}a_{n-2}^{-1}\cdots a_{2}^{-1}\right)\beta_{2i}\\
&=\Delta_R^{-1}\left(\prod_{i=1}^{l}\Delta^2\left(a_1^{-1}a_2^{-1}\cdots a_{n-2}^{-1}\right)\overline{\beta_{2i-1}}\Delta_L^{-1}\left(a_{n-1}^{-1}a_{n-2}^{-1}\cdots a_{2}^{-1}\right)\beta_{2i}\Delta_R^{-1}\right)\Delta_R\\
&=\Delta_R^{-1}\left(\Delta^{2l}\prod_{i=1}^{l}\left(a_1^{-1}a_2^{-1}\cdots a_{n-2}^{-1}\right)\overline{\beta_{2i-1}}\Delta_L^{-1}\left(a_{n-1}^{-1}a_{n-2}^{-1}\cdots a_{2}^{-1}\right)\beta_{2i}\Delta_R^{-1}\right)\Delta_R,
\end{align*}
meaning that $\beta$ is conjugate to 
\[\beta'=\Delta^{2l}\prod_{i=1}^{l}\underbrace{\left(a_1^{-1}a_2^{-1}\cdots a_{n-2}^{-1}\right)\overline{\beta_{2i-1}}\Delta_L^{-1}}_{L_i}\underbrace{\left(a_{n-1}^{-1}a_{n-2}^{-1}\cdots a_{2}^{-1}\right)\beta_{2i}\Delta_R^{-1}}_{R_i}.\]

Finally, using that homogeneous quasimorphisms are constant on conjugation classes, $\d(\Delta^2\alpha)=1+\d(\alpha)$ (since $\d(\alpha\beta)=\d(\alpha)+\d(\beta)$ for commuting $\alpha$ and $\beta$ and a homogeneous quasimorphism $\d$), and
$\d(\alpha\beta\gamma)\geq \d(\alpha\gamma)+\d(\beta)-1$ for all $n$-braids $\alpha$, $\beta$, $\gamma$ (which follows from $\d$ having defect at most $1$ and being constant on conjugation classes), we calculate   
\begin{align*}
\d(\beta)&=\d(\beta')
=\d\left(\Delta^{2l}\prod_{i=1}^{l}L_iR_i\right)
=l+\d\left(\prod_{i=1}^{l}L_iR_i\right)
\\&\geq l+\d(L_1)+\d\left(R_1\prod_{i=2}^{l}L_iR_i\right)-1
\\&\cdots
\\&\geq l+\d(L_1)+\d(L_2)+\cdots+\d(L_l)+\d\left(\prod_{i=1}^{l}R_i\right)-l
\\&=\d(L_1)+\d(L_2)+\cdots+\d(L_l)+\d\left(\prod_{i=1}^{l}R_i\right).
\end{align*}
Since $\d$ vanishes on braids that can be written without $a_1^{\pm1}$ (which include $\prod_{i=1}^{l}R_i$), and thus (by conjugation invariance) also on braids without $a_{n-1}^{\pm1}$ (which include $L_i$), we have $\d(\beta)\geq 0>\d(\beta)$. 
\end{proof}
The defect of $\fd$ is known to be $1$ for $n\geq 3$. We provide an argument, which is of the same flavour (but much simpler) than the above proof, and
\begin{lemma}\label{lemma:qmhaveD>=1} For $n\geq 3$
if a homogeneous quasimorphism $f\colon B_n\to\R$ satisfies 
$f\left(B_{n-1}\right)=\{0\}$, then
the defect of $f$ is bounded below by $\left|f\left(\Delta^2\right)\right|$, i.e.~$\left|f\left(\Delta^2\right)\right|\leq D_f$.
\end{lemma}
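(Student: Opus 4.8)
The plan is to mimic the proof of Theorem~\ref{thm:CharPropForFDTC}, but in a much simpler one-step form, and then to iterate via homogeneity to get the sharp constant. Write $d\coloneqq f(\Delta^2)$; by replacing $f$ with $-f$ if necessary, we may assume $d\geq 0$, and we want to exhibit two elements $a,b\in B_n$ with $|f(ab)-f(a)-f(b)|$ close to $d$, which forces $D_f\geq d$.

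First I would recall the key algebraic identity from the proof of Theorem~\ref{thm:CharPropForFDTC}: using $\Delta^{\pm1}\gamma=\overline{\gamma}\Delta^{\pm1}$ and $\Delta\Delta=\Delta^2$, one can rewrite a product of the form $a_1\gamma_1 a_1\gamma_2$ (with $\gamma_i\in B_{n-1}$) as $\Delta^2$ times a product $L R$ of a braid $L$ avoiding $a_{n-1}^{\pm1}$ and a braid $R$ avoiding $a_1^{\pm1}$. The cleanest instance takes $\gamma_1=\gamma_2=1$, giving $a_1 a_1 a_1 \cdots$—but in fact the simplest thing to do is to directly consider $a_1 a_1$: from $a_1 a_1 = \Delta^2 \Delta^{-2} a_1 a_1$ and the above bookkeeping one gets $a_1 a_1 = \Delta^2\, u\, v$ with $u=(a_1^{-1}\cdots a_{n-2}^{-1})\Delta_L^{-1}$ avoiding $a_{n-1}^{\pm 1}$ (hence $f(u)=0$ after conjugation) and $v=(a_{n-1}^{-1}\cdots a_2^{-1})\Delta_R^{-1}\in B_{n-1}$ (hence $f(v)=0$). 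Since homogeneous quasimorphisms vanish on $a_1$ (as $a_1\in$ a conjugate of $B_{n-1}$; indeed $a_1$ is conjugate into $B_{n-1}$, or more directly $f(a_1)=0$ follows because $a_1$ and $a_{n-1}$ are conjugate and $f(a_{n-1})=f(\iota(a_{n-1}))$—I should check which inclusion is being used and pin down that $f(a_1)=0$), we have $f(a_1 a_1)=0$, while $f(\Delta^2)+f(u)+f(v)=d$. Therefore $|f(\Delta^2 u v)-f(\Delta^2)-f(uv)|$ together with $|f(uv)-f(u)-f(v)|$ must account for a total discrepancy of $d$, so $D_f\geq d/2$ at this crude level.

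To upgrade $d/2$ to $d$, I would not work with $a_1a_1$ but with the $2l$-fold product and let $l\to\infty$, exactly as in the main proof: write $\beta_l\coloneqq (a_1)^{2l}$, which by the same rewriting is conjugate to $\Delta^{2l}\prod_{i=1}^l L_i R_i$ with $f(L_i)=f(R_i)=0$. Then $f(\beta_l)=0$ (homogeneity: $f((a_1)^{2l})=2l\,f(a_1)=0$), whereas applying the quasimorphism inequality to peel off the $2l$ factors $L_i,R_i$ from $\Delta^{2l}\prod L_iR_i$ shows $f(\Delta^{2l}\prod L_iR_i)\geq l d - (2l-1)D_f$, so $0\geq ld-(2l-1)D_f$, i.e. $D_f\geq \frac{l}{2l-1}d$; letting $l\to\infty$ gives $D_f\geq d/2$ again—so this particular bookkeeping only yields $d/2$. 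The honest route to the sharp $|f(\Delta^2)|\leq D_f$ is different and simpler: observe that $\Delta^2$ is central, so for any $\gamma$, $f(\Delta^{2}\gamma)=f(\Delta^2)+f(\gamma)$, and pick $\gamma=\Delta^{-2}$ written as a positive braid word? That fails since $\Delta^{-2}$ is negative. Instead I would use: there is $\gamma\in B_{n-1}$ with $\Delta^2=\gamma\cdot\delta$ for some $\delta$ conjugate into $B_{n-1}$—concretely $\Delta^2=(a_1a_2\cdots a_{n-1})^{n}$, and grouping the word $(a_1a_2\cdots a_{n-1})^n$ one can split it, after cyclic permutation, as a product of two braids each avoiding some $a_i^{\pm1}$; this is the standard fact that $\Delta^2$ is a product of two conjugates of elements of parabolic subgroups. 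Then $1=f(\Delta^2)$... wait, that would give $d\leq D_f+D_f$? Let me reconsider and instead record the cleanest correct argument below, which is the one I expect the author uses.

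The main obstacle, then, is getting the constant exactly $1$ (i.e. $|f(\Delta^2)|$) rather than $|f(\Delta^2)|/2$: the naive "peel off one generator" estimates all lose a factor of $2$ because each insertion of a conjugate-of-$B_{n-1}$ element costs a full $D_f$. The fix, which I would implement, is to not peel off $L_i,R_i$ one at a time but to use that $\prod_{i=1}^l R_i$ and (the conjugate of) $\prod L_i$ are each single elements on which $f$ vanishes, bounding $f\bigl(\Delta^{2l}\prod_{i=1}^{l}L_iR_i\bigr)$ from below by $ld-D_f\cdot(\text{bounded number of regroupings})$; more carefully, writing $P=\prod L_i$ (a conjugate of an element of $B_{n-1}$, so $f(P)=0$) and $Q=\prod R_i$ (in $B_{n-1}$, so $f(Q)=0$) is not literally possible because the $L_i$ and $R_i$ interleave, but after rearranging using centrality of $\Delta^{2}$ and commuting subgroups, one can separate them at the cost of finitely many (independent of $l$) error terms. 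Then $0=f(\beta_l)\geq ld-O(1)$ as $l\to\infty$ is absurd unless $d\leq 0$—contradiction with $d>0$. Hence $D_f\geq|f(\Delta^2)|$. I would present this as: assume $|f(\Delta^2)|>D_f$, derive from the above rewriting of $(a_1)^{2l}$ a lower bound $l|f(\Delta^2)|-(2l-1)D_f\leq f((a_1)^{2l})=0$, which for $l$ large contradicts $|f(\Delta^2)|>D_f$ once one notes the sharp regrouping only costs $(l-1)D_f$ rather than $(2l-1)D_f$ — and it is precisely this sharpening (separating all $R_i$'s together and all $L_i$'s together) that is the crux of the argument.
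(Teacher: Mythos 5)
Your final argument has a genuine gap at exactly the point you yourself call the crux. After rewriting $(a_1)^{2l}$ (up to conjugation) as $\Delta^{2l}\prod_{i=1}^{l}L_iR_i$, you need the total regrouping cost to be about $l\,D_f$ rather than $(2l-1)D_f$. The mechanism you offer for this---separating all the $L_i$'s from all the $R_i$'s ``using centrality of $\Delta^2$ and commuting subgroups'' at a cost of finitely many error terms independent of $l$---is false: the parabolic subgroups $\langle a_1,\dots,a_{n-2}\rangle$ and $\langle a_2,\dots,a_{n-1}\rangle$ do not commute, and if such an $O(1)$-cost separation existed, your own computation $0=f\bigl((a_1)^{2l}\bigr)\geq l\,f(\Delta^2)-O(1)$ would force $f(\Delta^2)\leq 0$ for \emph{every} homogeneous quasimorphism vanishing on $B_{n-1}$, contradicting the existence of the FDTC. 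The correct way to bring the cost down to $l\,D_f$ is the inequality $f(\alpha\beta\gamma)\geq f(\beta)+f(\alpha\gamma)-D_f$ (conjugate $\alpha\beta\gamma$ to $\beta\gamma\alpha$, then apply the defect bound once), which deletes each $L_i$ from the middle of the word at the price of a single defect while the $R_i$'s concatenate into the one element $\prod_{i}R_i\in B_{n-1}$; this is precisely how the proof of Theorem~\ref{thm:CharPropForFDTC} proceeds, and with it your $(a_1)^{2l}$ computation does give $l\,f(\Delta^2)\leq l\,D_f$ (with your normalization $f(\Delta^2)\geq 0$), hence the sharp bound. As written, however, that key step is asserted rather than proved, and the justification you supply for it is incorrect.

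For comparison, the paper's own proof of the lemma needs none of this machinery and no limit over $l$: it uses the single identity $a_1a_2\cdots a_{n-2}a_{n-1}a_{n-1}a_{n-2}\cdots a_2a_1=\Delta^2\Delta_R^{-2}$, so that for $\alpha=a_2\cdots a_{n-1}a_{n-1}\cdots a_2\in B_{n-1}$ and $\beta=a_1^2$ one has $f(\alpha\beta)=f(\Delta^2)$ (conjugation invariance, centrality of $\Delta^2$, and $f(\Delta_R^{-2})=0$), while $f(\alpha)=0$ and $f(\beta)=2f(a_1)=0$ since $a_1$ is conjugate to $a_2\in B_{n-1}$; hence $D_f\geq|f(\alpha\beta)-f(\alpha)-f(\beta)|=|f(\Delta^2)|$ in one step. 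If you repair your argument as indicated above it does yield the statement, but it is considerably more roundabout than the intended one.
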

\begin{proof}
First we note that
\[f\left(a_1a_2\cdots a_{n-2}a_{n-1}a_{n-1}a_{n-2}\cdots a_2a_1\right)=f(\Delta^2\Delta_R^{-2})=f(\Delta^2)+f(\Delta_R^{-2})=f(\Delta^2),\]
where the first equality is due to equality of the braids, the second equality uses that $\Delta^2$ is in the center, and the last equality uses that $f$ vanishes on $\Delta_R\in B_{n-1}\subset B_n$.
Hence, for $\alpha=a_2\cdots a_{n-2}a_{n-1}a_{n-1}a_{n-2}\cdots a_2\in B_{n-1}$ and $\beta=a_{1}a_{1}$, we find \[D_f\geq \left|f(\alpha\beta)-f(\alpha)-f(\beta)\right|=\left|f(\Delta^2)-0-0\right|,\] where we used that $f$ evaluates to the same on $a_1a_2\cdots a_{n-2}a_{n-1}a_{n-1}a_{n-2}\cdots a_1$ and its conjugate $\alpha\beta$.
\end{proof}
As an aside we note that the proof of Lemma~\ref{lemma:qmhaveD>=1} shows that for $\fd$ the supremum $D_f$ is attained when $n\geq 3$.
\section{Constructions of quasimorphisms and the proof of Proposition~\ref{prop:uncqms}
}\label{sec:BF}
In this section we discuss the existence of many homogeneous quasimorphism on $B_n$ for $n\geq 3$ as claimed in Proposition~\ref{prop:uncqms}.
We make use of a geometric group theory setup due to Bestvina and Fujiwara~\cite{BestvinaFujiwara_02}, which we do not recall in detail. Since this makes this section the least self-contained, we point out that 
skipping this section can be done at no cost of understanding the results from the introduction except, of course, Proposition~\ref{prop:uncqms}.
 
Proposition~\ref{prop:uncqms} reduces to the following lemma.
\begin{lemma}\label{lem:l1embedsinQH} Let $n\geq 3$. There exist an injective $\R$-linear map
\[
\ell^1\to \left\{f\colon B_n\to\R\mid \text{$f$ is a homogeneous quasimorphism and } f(B_{n-1})=\{0\}\right\}.\]
\end{lemma}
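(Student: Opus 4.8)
The plan is to invoke the Bestvina–Fujiwara machinery for groups acting on $\delta$-hyperbolic spaces with a WPD element, and to apply it not to $B_n$ directly but to an action that "sees" the subgroup condition $f(B_{n-1}) = \{0\}$. First I would recall the main output of \cite{BestvinaFujiwara_02}: if a group $G$ acts on a $\delta$-hyperbolic space $X$ by isometries, and $g \in G$ acts as a hyperbolic (loxodromic) isometry satisfying the WPD condition, and moreover $g$ is not conjugate to $g^{-1}$ (i.e.\ $g$ is not "commensurated the wrong way"), then the space $\widetilde{QH}(G) = QH(G)/(H^1(G;\R)\oplus \R)$ of nontrivial homogeneous quasimorphisms modulo homomorphisms is infinite-dimensional; in fact their construction produces an embedding of $\ell^1(\mathbb{N})$ (or $\ell^1$ of a countable set), built from Brooks-type counting quasimorphisms along quasigeodesic segments associated to powers/variants of $g$. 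The quasimorphisms so constructed are each supported, up to bounded error, on elements whose axes fellow-travel a translate of the axis of $g$, and crucially vanish on elements that do not — this is the feature I want to exploit.

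The key step is choosing the action correctly so that the resulting quasimorphisms automatically kill $B_{n-1}$. I would take $G = B_n$ acting on the Cayley graph (or on a suitable $\delta$-hyperbolic complex) in such a way that there is a pseudo-Anosov-type element $g \in B_n$ — for instance an element whose underlying mapping class of the $n$-punctured disk is pseudo-Anosov — that acts loxodromically and WPD-ly, and such that $g$ is "independent" of $B_{n-1}$: concretely, no positive power of $g$ is conjugate into $B_{n-1}$, and $B_{n-1}$ itself acts elliptically (or at least does not fellow-travel the axis of $g$). The cleanest incarnation is the action of $B_n$ on the curve complex (or arc complex) of the $n$-punctured disk: $B_{n-1} \subset B_n$ is the stabilizer of a curve surrounding $n-1$ of the punctures, hence fixes a vertex and acts elliptically, while a carefully chosen pseudo-Anosov $g$ acts loxodromically and satisfies WPD by Bowditch's acylindricity / Masur–Minsky. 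Then every Bestvina–Fujiwara quasimorphism $f$ produced from $g$ vanishes on elliptic elements, in particular on all of $B_{n-1}$, and one checks the construction genuinely lands in $\ell^1$-many linearly independent such $f$ — this is exactly Lemma~\ref{lem:l1embedsinQH}. Homogenizing does not affect vanishing on $B_{n-1}$ since $B_{n-1}$ is a subgroup.

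I would assemble the proof as: (1) recall the precise statement from \cite{BestvinaFujiwara_02} including the $\ell^1$-embedding and the vanishing-on-elliptics property; (2) exhibit the action of $B_n$ on the curve complex of the punctured disk, cite acylindricity/WPD for it, cite that $B_{n-1}$ stabilizes a vertex, and exhibit one loxodromic WPD element not commensurable to its inverse and whose axis $B_{n-1}$ does not track; (3) conclude that the induced injective $\R$-linear map $\ell^1 \to \widetilde{QH}(B_n)$ has image consisting of (classes of) homogeneous quasimorphisms vanishing on $B_{n-1}$, and lift to an honest map into the function space as in the statement; (4) note Proposition~\ref{prop:uncqms} follows since an $\ell^1$-embedding gives continuum-many linearly independent elements, and the defect of each explicit Brooks-type quasimorphism can be normalized to be arbitrarily close to its value on $\Delta^2$ after the appropriate rescaling — actually here I should be slightly careful and instead cite that the construction can be tuned so the defect bound $1+\epsilon$ holds, which is a quantitative refinement of the Bestvina–Fujiwara estimates.

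The main obstacle I expect is the defect control, not the existence of infinitely many quasimorphisms. Producing \emph{many} homogeneous quasimorphisms vanishing on $B_{n-1}$ is essentially immediate from acylindrical hyperbolicity of $B_n$ plus the elliptic action of $B_{n-1}$; but Proposition~\ref{prop:uncqms} demands defect at most $1+\epsilon$ relative to the normalization $f(\Delta^2)=1$, and the raw Brooks-type quasimorphisms have defect of an uncontrolled size. The fix is to use that a single counting quasimorphism along a long enough quasigeodesic word $w$ (a high power of $g$, or a long product of "independent" elements) has defect as small as one likes compared to its value on the loxodromic element generating it — the defect-to-amplitude ratio can be pushed to zero by taking $w$ long — and then to rescale so that $f(\Delta^2) = 1$ while $D_f \le 1+\epsilon$; ensuring $f(\Delta^2)$ is actually nonzero (so the rescaling is legitimate) requires choosing $g$ so that its axis and the axis of $\Delta^2$ interact, or alternatively adding a controlled multiple of the FDTC, which itself has $\fd(\Delta^2)=1$ and defect $1$, and is the canonical quasimorphism vanishing on $B_{n-1}$. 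I would handle this by writing $f = \fd + \epsilon' h$ where $h$ is a small, carefully normalized Bestvina–Fujiwara quasimorphism vanishing on $B_{n-1}$, so that $f(\Delta^2) = 1$ automatically and $D_f \le D_{\fd} + \epsilon' D_h \le 1 + \epsilon$ for $\epsilon'$ small; running this over an $\ell^1$'s worth of linearly independent $h$ gives the statement. The delicate point throughout is keeping these estimates honest, which is why the cleanest writeup isolates the abstract $\ell^1$-embedding as Lemma~\ref{lem:l1embedsinQH} and defers the defect bookkeeping to the deduction of Proposition~\ref{prop:uncqms}.
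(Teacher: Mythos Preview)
Your proposal is correct and follows essentially the same strategy as the paper: invoke the Bestvina--Fujiwara construction via an action on a curve complex in which $B_{n-1}$ fixes a vertex, and use that the homogenized counting quasimorphisms vanish on elements with a fixed point (equivalently, on elliptics). The only cosmetic difference is that the paper passes explicitly to the quotient $B_n/\langle\Delta^2\rangle$ acting on the curve complex of the $(n+1)$-punctured sphere and cites \cite[Proposition~11]{BestvinaFujiwara_02} directly, whereas you work with $B_n$ on the curve complex of the punctured disk; since $\Delta^2$ acts trivially on that complex anyway, the two setups are the same, and the extra ``independence'' conditions you impose on the loxodromic $g$ are not actually needed.
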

Here $\ell^1$ denotes the vector space of real-valued sequences $\{a_i\}_{n\in N}$ with $\sum_{n=1}^\infty |a_i|<\infty$. 
Dropping the condition $f(B_{n-1})=\{0\}$, Lemma~\ref{lem:l1embedsinQH} is known by work of Bestvina and Fujiwara. Indeed,
there exist an injective $\R$-linear map \[
\ell^1\to \{f\colon B_n\to\R\mid \text{$f$ is a homogeneous quasimorphism}\}\]
by~\cite[Theorem~7 and Proposition~11]{BestvinaFujiwara_02}. In fact, inspection of their proof reveals that all elements in the image of the $\R$-linear map they construct vanish on $B_{n-1}$. We explain this using the setup, notations, and results from~\cite{BestvinaFujiwara_02}. We only make use of these in the proof of Lemma~\ref{lem:l1embedsinQH}, and we only invoke Lemma~\ref{lem:l1embedsinQH} to prove Proposition~\ref{prop:uncqms}. 

\begin{proof}[Proof of Lemma~\ref{lem:l1embedsinQH}] Bestvina and Fujiwara construct a large vector subspace of the vector space of homogeneous quasimorphism on a group $G$ whenever the group $G$ has an action on a $\delta$-hyperbolic space $X$ that satisfies weak proper discontinuity (WPD for short)~\cite[Theorem~7]{BestvinaFujiwara_02}.
Actually, Bestvina and Fujiwara construct quasimorphisms that are in general not homogeneous, and then consider the quotient of the vector space of quasimorphisms by bounded functions. However, this quotient is readily identified with the vector space of homogeneous quasimorphisms. This identification is given by taking the quasimorphisms~$h_\omega$ from the construction of Bestvina and Fujiwara to their homogenizations $\widetilde{h_\omega}$. Under this identification, their construction translates to constructing a subspace of the vector space of homogeneous quasimorphism isomorphic to $\ell^1$ given by
$\{\sum_{n=1}^\infty a_ib_i\mid \sum_{n=0}^\infty|a_i|<\infty\}$, where the $b_i$ are elements of the form~$\widetilde{h_\omega}$.

From the construction in~\cite[Section 2]{BestvinaFujiwara_02} of the homogeneous quasimorphism ${h_\omega}$ it follows that if an element $r\in G$ has a fixed point $x_0\in X$, then the homogeneous quasimorphism $\widetilde{h_\omega}$ vanishes on $r$. Indeed, choosing $x_0$ as the basepoint in their construction of the quasimorphism $h_\omega$, we see that $h_\omega(r^k)=0$ for all $k\in \Z$. In particular, the homogeneous quasimorphism $\widetilde{h_\omega}$ satisfies $\widetilde{h_\omega}(r)\coloneqq\lim_{k\to\infty}\frac{h_\omega(r^k)}{k}=\lim_{k\to\infty}\frac{0}{k}=0$.

For technical reasons we choose our group $G$ to be the quotient $G\coloneqq B_n/\langle \Delta^2\rangle$ rather than $B_n$. Of course any quasimorphism on $G$ gives rise to one on $B_n$ by composing with the quotient map $\pi\colon B_n\to G$. Thus, by the last paragraph it remains to check that $G$ has an action on a $\delta$-hyperbolic space that satisfies WPD such that the elements of $\pi(B_{n-1})\subset G$ have a fixed point.
To do this we identify $B_n$ with the mapping class group of the $n$-punctured disc and
we identify $G=B_n/\langle \Delta^2\rangle$ with a finite index subgroup of the mapping class group of the $(n+1)$-punctured sphere. Then $G$ naturally acts on the curve complex $X$ of the $(n+1)$-punctured sphere.
The curve complex $X$ is $\delta$-hyperbolic and the action of $G$ on $X$ satisfies WPD since the action of the full mapping class satisfies WPD~\cite[Proposition~11]{BestvinaFujiwara_02} and restricting an action that satisfies WPD to a finite index subgroup yields an action that satisfies WPD.
We conclude the proof by noting that there exists a simple closed curve $\gamma$ in the $(n+1)$-punctured sphere (in particular, $[\gamma]\in X$) such that $\pi(B_{n-1})=\{[\phi]\in G\mid [\phi][\gamma]=[\gamma]\}$. For sake of completeness, we describe such a $\gamma$ explicitly.

For this we make the identification of $B_{n}$ with the mapping class group of the $n$-punctured disc $D$ (taken as the closed unit disc in $\C$ with the punctures placed on the open interval $(-1,1)$ and ordered by the usual order on $(-1,1)\subset \R$) explicit. Namely, we chose an identification isomorphism that sends the generator $a_i$ to the mapping class given by a positive half-twist that exchanges the $i$-th and $(i+1)$-th punctures and is the identity outside a small neighbourhood of the arc on the real line connecting the $i$-th and $(i+1)$-th puncture. We further identify the $(n+1)$-punctured sphere with the quotient $D/S^1$, where the punctures are as for $D$ with one extra puncture: the point $\infty$ in the quotient corresponding to the collapsed $S^1$. This yields an explicit identification of $G$ with the subgroup of the mapping class group of the $(n+1)$-punctured sphere given by those mapping classes that fix the puncture $\infty$. This identification is such that the quotient map $\pi\colon B_n\to G$ is identified with the group homomorphism between the mapping class groups induced by the quotient map $D\to D/S^1$. See e.g.~\cite{HironakaEiko_06,Birman_74_BraidsLinksAndMCGs} for these identifications.

With this set up, we choose $\gamma$ to be a simple closed curve in $D\setminus S^1\subset D/S^1$ that is the boundary of a round disc in $D\setminus S^1$ that contains all but the first puncture. Then, indeed, $B_{n-1}\subset B_n$ is sent to mapping classes that have a representative that restricts to the identity on $\gamma$.
\end{proof}
\begin{proof}[Proof of Proposition~\ref{prop:uncqms}]
Fix $\epsilon>0$. And, for $r\in\R$, let $f_r$ be the image of the $r$-th basis element of a chosen basis for $\ell^1$ under an injective map guaranteed to exist by Lemma~\ref{lem:l1embedsinQH}. Up to multiplication with a constant, we can arrange for $f_r$ to satisfy $f_r(\Delta^2)\geq 0$ and $D_{f_r}<\epsilon$.
Define $g_r\coloneqq \frac{1}{1+f_r(\Delta^2)}(\omega+f_r)$, and note that $g_r(\Delta^2)=1$, $g_r(B_{n-1})=\{0\}$, and $D_{g_r}\leq 1+D_{f_r}<1+\epsilon$. Hence, for all but at most one $a\in\R$, $\{g_r\}_{r\in\R\setminus \{a\}}$ is a basis of a subspace of \[\left\{f\colon B_n\to\R\mid \text{$f$ is a homogeneous quasimorphism and } f(B_{n-1})=\{0\}\right\}.\qedhere\]
\end{proof}

\section{The proofs of Theorem~\ref{thm:sBineq} and Proposition~\ref{prop:Df<=A(n-1)}}\label{sec:proofsBi}
For the proof of Theorem~\ref{thm:sBineq}, we use that the FDTC can be expressed in terms of the homogenization of the Upsilon invariant. 
For all $\beta\in B_n$ and $t=\frac{2}{n-1}$, we have
\begin{equation}\label{eq:fdtcviaUpsilon}
\fd(\beta)=\frac{\widetilde{\Upsilon}_\beta(t)}{t}+\frac{\wr(\beta)}{2},
\end{equation} 
by \cite[Theorem~1.3]{feller_hubbard}. 
Here, for each $\beta\in B_n$ and for $\delta\coloneqq a_1a_2\cdots a_{n-1}\in B_n$,
\begin{equation}\label{eq:defhomUpsilon}
\widetilde{\Upsilon_\beta}\coloneqq \lim_{k\to\infty}\frac{\Upsilon_{\widehat{\beta^{nk}\delta}}(t)}{nk},\end{equation}
where 
for a knot $K$ and $t\in[0,1]$ we denote by $\Upsilon_K(t)$ the Upsilon invariant introduced in~\cite{OSS_2014}.
For more details on homogenization of knot invariants compare~\cite{GambaudoGhys_BraidsSignatures,Brandenbursky_11} and Appendix~\ref{ap:sBiforI}. For $\Upsilon$ specifically see~\cite{FellerKrcatovich_16_OnCobBraidIndexAndUpsilon}.

Recasting $\fd$ using $\Upsilon$ via~\eqref{eq:fdtcviaUpsilon} allows us to make use of the following slice genus bound. For every knot $K$, we have
\begin{equation}\label{eq:Upsilon<=tg4}
\Upsilon_K(t)\leq tg_4(K)\text{ for all }t\in[0,1]\quad\text{\cite[Theorem~1.11]{OSS_2014}}.
\end{equation} 

As a further input for the proof of Theorem~\ref{thm:sBineq}, but also the proof of Proposition~\ref{prop:Df<=A(n-1)}, we need cobordisms with small genera between knots and links arising as connected sums and arising as closures of compositions of braids. 
\begin{lemma}\label{lem:cobordisms}
Let $\alpha$, $\beta$, and $\gamma$ be in $B_n$.
\begin{enumerate}[(a)]
\item \label{item:coba}There exists a cobordism given by $(n-1)$ $1$-handles between $\widehat{\alpha\beta}$ and a
connected sum of $\widehat{\alpha}$ and $\widehat{\beta}$.
\item \label{item:cobb} If at least one of the braids $\alpha$, $\beta$, or $\gamma$ is a pure braid, then there exists a cobordism
given by $2(n-1)$ $1$-handles between $\widehat{\alpha\beta\gamma}$ and $\widehat{\alpha\gamma\beta}$.
\end{enumerate}
\end{lemma}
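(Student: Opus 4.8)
The plan is to prove Lemma~\ref{lem:cobordisms} by exhibiting explicit cobordisms built from band moves (saddle moves, i.e.\ $1$-handle attachments) on braid closures, read off from standard braid/link diagrams. Both parts are essentially classical facts about Markov moves and conjugation, so the work is in drawing the right picture and counting handles; the genuine content is the handle count, not existence.

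\medskip
\noindent\textbf{Part~\eqref{item:coba}.} First I would place the closures $\widehat{\alpha}$ and $\widehat{\beta}$ side by side in disjoint balls, drawn as closures of $n$-braids. The connected sum $\widehat{\alpha}\#\widehat{\beta}$ is obtained (after choosing the connect-sum sphere to meet a strand of each) by fusing one strand of $\widehat{\alpha}$ with one strand of $\widehat{\beta}$. To pass from that split-then-fused picture to $\widehat{\alpha\beta}$ one must additionally merge the remaining $n-1$ pairs of parallel strands coming from the two braid closures; each such merge is a single band move. Concretely: stacking the braid $\alpha$ on top of $\beta$ and closing up gives $\widehat{\alpha\beta}$ on $n$ strands, whereas closing $\alpha$ and $\beta$ separately gives $n+n=2n$ strands, of which one connect-sum band reduces to $2n-1$; the remaining $n-1$ bands reduce $2n-1$ to $n$. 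So $\widehat{\alpha\beta}$ and $\widehat{\alpha}\#\widehat{\beta}$ cobound a surface made of $n-1$ bands (after the connect sum), i.e.\ a cobordism given by $n-1$ $1$-handles, as claimed. I would phrase this as: there is a sequence of $n-1$ saddle moves connecting the two links, hence a genus-bounded cobordism realized by $n-1$ $1$-handles.

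\medskip
\noindent\textbf{Part~\eqref{item:cobb}.} Here the key point is that conjugation of braids induces an isotopy of closures (a Markov move), and the only obstruction to turning $\widehat{\alpha\beta\gamma}$ into $\widehat{\alpha\gamma\beta}$ for free is that $\beta$ and $\gamma$ need not commute. If, say, $\gamma$ is pure, then I would use part~\eqref{item:coba} twice: $\widehat{\alpha\beta\gamma}$ is cobordant via $n-1$ bands to $\widehat{\alpha\beta}\#\widehat{\gamma}$, and since $\gamma$ is pure its closure $\widehat\gamma$ has $n$ components so the connect sum can be taken along any prescribed component; by cyclic permutation (Markov) $\widehat{\alpha\beta}$ is isotopic to $\widehat{\beta\alpha}$, and reassembling along the appropriate strand of $\widehat\gamma$ with another $n-1$ bands produces $\widehat{\alpha\gamma\beta}$ (or a cyclic rotation thereof, again isotopic by Markov). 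That gives $2(n-1)$ $1$-handles total. The cases where $\alpha$ or $\beta$ is pure are handled by cyclically rotating first (e.g.\ $\widehat{\alpha\beta\gamma}\simeq\widehat{\beta\gamma\alpha}$) so that the pure braid plays the role of the last factor, then applying the previous argument; one checks the target $\widehat{\alpha\gamma\beta}$ is reached up to a cyclic rotation, hence up to isotopy.

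\medskip
\noindent\textbf{Main obstacle.} The substantive point—and the step I would be most careful about—is the role of the pureness hypothesis in~\eqref{item:cobb}: it is exactly what guarantees that the relevant braid closure has $n$ numbered components, so that a connect sum can be performed along a \emph{specified} strand and then undone along the cyclically shifted strand without introducing extra bands. Without pureness the permutation induced by the braid can force the connect-sum band to land on the ``wrong'' component, breaking the count. I would therefore spell out which component of $\widehat\gamma$ each band is attached to, using that $\gamma$ pure means the $i$-th strand of $\gamma$ returns to position $i$. Everything else is bookkeeping: band moves give cobordisms built from $1$-handles, cyclic permutation and conjugation give isotopies, and the handle totals are $n-1$ and $2(n-1)$ respectively.
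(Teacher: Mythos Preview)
Your argument for part~\eqref{item:coba} is correct and essentially the same as the paper's.

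For part~\eqref{item:cobb}, however, your approach differs from the paper's and contains a genuine gap. You split off $\widehat{\gamma}$ to obtain a connected sum $\widehat{\alpha\beta}\#\widehat{\gamma}$, then invoke the isotopy $\widehat{\alpha\beta}\cong\widehat{\beta\alpha}$ and hope to reassemble into $\widehat{\beta\alpha\gamma}=\widehat{\alpha\gamma\beta}$. The problem is that a connected sum of links depends on the choice of component on \emph{both} sides, and the component of $\widehat{\alpha\beta}$ singled out by your first split need not coincide (under the cyclic isotopy) with the component required for the reassembly. Concretely: splitting $\widehat{\alpha\beta\gamma}$ along strand~$j$ attaches $\widehat{\gamma}$ to the cycle of $\sigma_{\alpha\beta}$ containing~$j$, whereas the reverse of splitting $\widehat{\alpha\gamma\beta}$ (short-circuiting $\gamma$ between $\alpha$ and $\beta$) along strand~$j$ attaches at the cycle containing $\sigma_\alpha^{-1}(j)$. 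These cycles can differ---take for instance $n=4$, $\sigma_\alpha=(1\,2)(3\,4)$, $\sigma_\beta=(1\,2)$, so $\sigma_{\alpha\beta}=(3\,4)$ and the cycles through $1$ and through $\sigma_\alpha^{-1}(1)=2$ are distinct singletons. Your stated reason for invoking pureness (that $\widehat{\gamma}$ has $n$ labelled components, giving freedom on the $\widehat{\gamma}$ side) does not resolve this, because the $\widehat{\gamma}$-component must match as well, forcing $j=k$ and leaving the $\widehat{\alpha\beta}$-side mismatch unresolved.

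The paper's route is different: it splits off $\widehat{\beta}$ (the \emph{middle} factor), obtaining $\widehat{\alpha\gamma}\#\widehat{\beta}$ with the $\widehat{\beta}$-summand sitting between $\alpha$ and $\gamma$, and then \emph{slides this summand through $\gamma$} along the connecting strand. Pureness of $\gamma$ enters exactly here: the strand one slides along exits $\gamma$ at the same position it entered, so the $\widehat{\beta}$-summand lands below $\gamma$ in the correct position to be reattached by $n-1$ bands into $\widehat{\alpha\gamma\beta}$. This is the geometric role of the hypothesis you should isolate.
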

We remark that in \eqref{item:coba}, we do not claim to control which connected sum of $\widehat{\alpha}$ and $\widehat{\beta}$ is involved. (Recall that for two links $L_1$ and $L_2$ the notion of connected sum $L_1\#L_2$ depends on a choice of component in each link.) We postpone the proof of Lemma~\ref{lem:cobordisms} to after its application in the proofs of Theorem~\ref{thm:sBineq}, where we use \eqref{item:coba}, and Proposition~\ref{prop:Df<=A(n-1)}, where we employ~\eqref{item:cobb}.

For the proof of Theorem~\ref{thm:sBineq}, we observe that there exists a cobordism consisting of $(n-1)nk$ $1$-handles between $\widehat{\beta^{nk}\delta}$
and a $nk$-fold connected sum of $\widehat{\beta}$; we denote 
the latter by $nk\widehat{\beta}$. Indeed, by concatenation of $nk$ cobordism as provided by Lemma~\ref{lem:cobordisms}\eqref{item:coba}, we find such a cobordism 
between $\widehat{\beta^{nk}\delta}$ and a connected sum of $nk$ many
$\widehat{\beta}$ and one $\widehat{\delta}$ (which is an unknot) as desired; compare also \cite[Appendix~A]{feller_hubbard}.
In particular, we have
\begin{equation}\label{eq:chi-chi}
1-\chi_4\left(\widehat{\beta^{nk}\delta}\right)\leq 1-\chi_4\left(nk\widehat{\beta}\right)+nk(n-1)
\leq nk\left(1-\chi_4\left(\widehat{\beta}\right)\right)+nk(n-1),
\end{equation}
where the second inequality follows from $1-\chi_4$ being subadditive under connected sum. 

\begin{proof}[Proof of Theorem~\ref{thm:sBineq}]
Set $t=\frac{2}{n-1}$. For every $\beta\in B_n$, we have 
\begin{align*}
\fd(\beta)&\overset{\text{\eqref{eq:fdtcviaUpsilon}}}
{=}\frac{\widetilde{\Upsilon}_\beta(t)}{t}+\frac{\wr(\beta)}{2}
\overset{\text{\eqref{eq:defhomUpsilon}}}
{=}\lim_{k\to\infty}\frac{\Upsilon_{\widehat{\beta^{nk}\delta}}(t)}{nkt}+\frac{\wr(\beta)}{2}
\\&
\overset{\text{\eqref{eq:Upsilon<=tg4}}}{\leq} \lim_{k\to\infty}\frac{g_4\left(\widehat{\beta^{nk}\delta}\right)}{nk}+\frac{\wr(\beta)}{2}
{=}\lim_{k\to\infty}\frac{1-\chi_4\left(\widehat{\beta^{nk}\delta}\right)}{2nk}+\frac{\wr(\beta)}{2}
\\&
\overset{\text{\eqref{eq:chi-chi}}}{\leq}\lim_{k\to\infty}\frac{nk\left(1-\chi_4\left(\widehat{\beta}\right)\right)+nk(n-1)}{2nk}+\frac{\wr(\beta)}{2}
\overset{\phantom{\text{\eqref{eq:fdtcviaUpsilon}}}}
{=}\frac{-\chi_4\left(\widehat{\beta}\right)+n}{2}+\frac{\wr(\beta)}{2}
\\&
\overset{\text{\eqref{eq:sBineqwr}}}{\leq}\frac{-\chi_4\left(\widehat{\beta}\right)+n}{2}+\frac{-\chi_4\left(\widehat{\beta}\right)+n}{2}
\overset{\phantom{\text{\eqref{eq:fdtcviaUpsilon}}}}
{=}-\chi_4\left(\widehat{\beta}\right)+n.\qedhere
\end{align*}
\end{proof}

\begin{proof}[Proof of Proposition~\ref{prop:Df<=A(n-1)}]
Fix $\epsilon>0$ and let $\alpha$ and $\beta$ be $n$-braids such that $f(\alpha\beta)-f(\alpha)-f(\beta)\geq D_f-\epsilon$.
We first note that we can and do assume that $\alpha$ and $\beta$ are pure braids.
Indeed, if not, pick $\alpha'$ and $\beta'$ such that $f(\alpha'\beta')-f(\alpha')-f(\beta')\geq D_f-\epsilon/n$ and set $\alpha\coloneqq (\alpha')^n$ and $\beta\coloneqq (\beta')^n$. Combining
\[f((\alpha'\beta')^n)-f((\alpha')^n)-f((\beta')^n)=nf(\alpha'\beta')-nf(\alpha')-nf(\beta')\geq nD_f-\epsilon\] with $|f((ab)^n)-f(a^nb^n)|\leq (n-1)D_f$, which one checks by iteratively applying \[\left|f(a^{k-1}b^{k-1})+f(ab)-f(a^kb^k)\right|=\left|f(b^{k-1}a^{k-1})+f(ab)-f(b^{k-1}a^{k-1}(ab))\right|\leq D_f,\]
we have that $f(\alpha\beta)-f(\alpha)-f(\beta)\geq D_f-\epsilon$.

Fix an even positive integer $k$. Using that $f$ is homogeneous and that $f(ab)-f(a)-f(b)\leq D_f$ for all $a,b\in B_n$, we calculate
\begin{align*}
kD_f-k\epsilon&\leq k(f(\alpha\beta)-f(\alpha)-f(\beta))
=f\left((\alpha\beta)^{k}\right)+f\left(\alpha^{-k}\right)+f\left(\beta^{-k}\right)\\
&\leq f\left((\alpha\beta)^{k}\alpha^{-k}\right)+D_f+f\left(\beta^{-k}\right)\\
&\leq f\left((\alpha\beta)^{k}\alpha^{-k}\beta^{-k}\right)+D_f+D_f\\
&\leq f\left((\alpha\beta)^{k}\alpha^{-k}\beta^{-k}\delta\right)-f(\delta)+D_f+D_f+D_f\\
&\leq Ag_4(K)+C-f(\delta)+D_f+D_f+D_f,
\end{align*}
where $K$ denotes the closure of $(\alpha\beta)^{k}\alpha^{-k}\beta^{-k}\delta$ and, as above, $\delta=a_1\cdots a_{n-1}$. Note that $K$ is a knot since $(\alpha\beta)^{k}\alpha^{-k}\beta^{-k}$ is a pure braid.

Next we observe that there exists a cobordism of genus $\frac{k}{2}(n-1)$ between $K$ and the closure of $\beta^{k}\alpha^{k}\alpha^{-k}\beta^{-k}\delta=\delta$. 

For this, we write $(\alpha\beta)^{k}\alpha^{-k}\beta^{-k}$ as a product of $\frac{k}{2}$ commutators of pure braids, i.e.~$(\alpha\beta)^{k}\alpha^{-k}\beta^{-k}=[\alpha_1,\beta_1][\alpha_2,\beta_2]\cdots[\alpha_\frac{k}{2},\beta_\frac{k}{2}]$ for some pure braids $\alpha_i,\beta_i\in B_n$. This is possible by~\cite[Proof of Lemma~2.24]{Calegari_scl}; compare also~\cite{Bavard_scl}.\footnote{
In the first version of this article, we use a different expression for $(\alpha\beta)^{k}\alpha^{-k}\beta^{-k}$. We are thankful to Tesuya Ito for reminding us that the stable commutator length of a commutator is at most $1/2$, which improved the bound of Proposition~\ref{prop:Df<=A(n-1)} by a factor of $\frac{1}{2}$.}

By Lemma~\ref{lem:cobordisms}\eqref{item:cobb}, for all $b\in B_n$ the closures of $[\alpha_i,\beta_i]b=\alpha_i\beta_i\alpha_i^{-1}\beta_i^{-1}b$ and $\beta_i\alpha_i\alpha_i^{-1}\beta_i^{-1}b=b$ are related by a cobordism with $2(n-1)$ $1$-handles, hence
applying this $\frac{k}{2}$ times gives a cobordism between $K$ and the closure of $\delta$ given by $k(n-1)$ $1$-handles. In other words, we have a cobordism of genus $\frac{k}{2}(n-1)$ as desired.

Since $\delta$ has the unknot as its closure, we have $g_4(K)\leq \frac{k}{2}(n-1)$ by the last paragraph.
We conclude that
\[kD_f-k\epsilon\leq A\frac{k}{2}(n-1)+C-f(\delta)+3D_f,\] which yields $D_f\leq \frac{A}{2}(n-1)$ by first dividing by $k$ and taking the limit $k\to\infty$ and then letting $\epsilon$ tend to 0. 
\end{proof}

Finally, we turn to the proof of Lemma~\ref{lem:cobordisms}. The idea of the proof is of a similar flavour as the arguments used in \cite{Brandenbursky_11} and \cite[Appendix~A]{feller_hubbard}, but to the best of our knowledge, the exact statement does not yet appear in the literature.
\begin{proof}[Proof of Lemma~\ref{lem:cobordisms}] To see~\eqref{item:coba}, consider a diagram for $\widehat{\alpha\beta}$ as depicted in Figure~\ref{fig:cobs}~A) (where $\gamma$ is taken to be the trivial braid) and apply $(n-1)$ handle moves, starting with the one indicated by the blackboard framed dotted (green) arc, to find the diagram in Figure~\ref{fig:cobs}~B).
\begin{figure}[h]
\centering
\def\svgscale{0.8}
\input{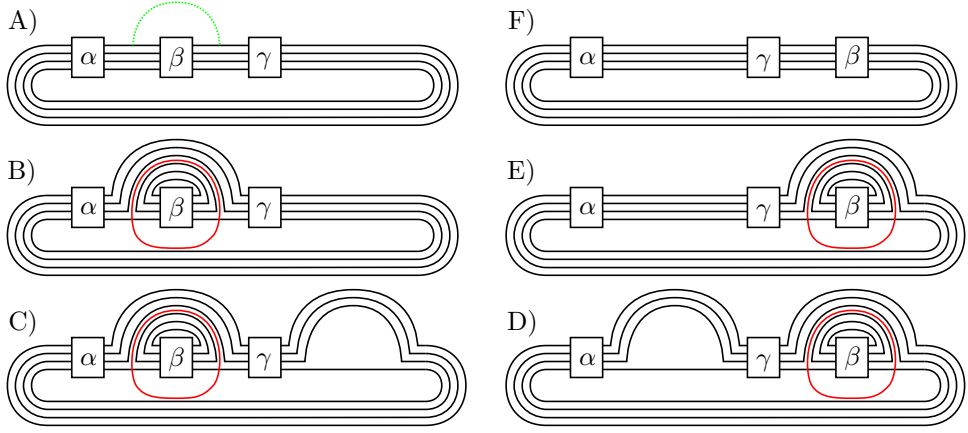}
\caption{Isotopies and cobordisms proving Lemma~\ref{lem:cobordisms}. For readability of the diagrams the illustration is for $n=4$.}
\label{fig:cobs}
\end{figure}
The link given by this diagram is a connected sum of $\widehat{\alpha}$ and $\widehat{\alpha\beta}$ with respect to the indicated sphere (red); hence, we have that there exists a cobordism between
$\widehat{\alpha\beta}$ and the connected sum of $\widehat{\alpha}$ and $\widehat{\alpha\beta}$ depicted in Figure~\ref{fig:cobs}~B).

 We turn to~\eqref{item:cobb}. Since $\alpha\beta\gamma$, $\beta\gamma\alpha$, and $\gamma\alpha\beta$ all are conjugate and hence have the same closure, we may and do assume that $\gamma$ is a pure braid. Consider a diagram for $\widehat{\alpha\beta}$ as depicted in Figure~\ref{fig:cobs}~A) and apply $(n-1)$ handle moves to find the diagram in Figure~\ref{fig:cobs}~B) as in the proof of~\eqref{item:coba}.
 Figure~\ref{fig:cobs}~B) and Figure~\ref{fig:cobs}~C) depict isotopic links (in fact the diagrams are the same up to isotopy of the plane). Figure~\ref{fig:cobs}~C) and Figure~\ref{fig:cobs}~D) depict isotopic links. An isotopy is given by shrinking and moving the summand $\widehat{\beta}$ through $\gamma$ (here we invoke that $\gamma$ is a pure braid). Finally, Figure~\ref{fig:cobs}~D) and Figure~\ref{fig:cobs}~E) depict isotopic links and $(n-1)$ handle moves turn the diagram given in Figure~\ref{fig:cobs}~E) into the one given in Figure~\ref{fig:cobs}~F). All in all we find that there exists a cobordism given by $2(n-1)$ $1$-handles between $\widehat{\alpha\beta\gamma}$ and $\widehat{\alpha\gamma\beta}$ as desired.
\end{proof}
\appendix
\section{The slice-Bennequin inequality for the homogenization of concordance homomorphisms}\label{ap:sBiforI}
In this 
appendix, we explain 
that
the homogenization of a concordance homomorphism satisfies a version of the slice-Bennequin inequality. This can be understood as providing a common framework for both the slice-Bennequin inequality~\eqref{eq:sBineqwr} and Theorem~\ref{thm:sBineq}; see Examples~\ref{Ex:I=sti} and~\ref{Ex:I=Upsilon}, respectively.
What follows below is based on the same idea as the proof of Theorem~\ref{thm:sBineq}, which was rather straight forward once the necessary preparations (like Lemma~\ref{lem:cobordisms}) are made.
Still, we think it is worth making this explicit as the exact statement and perspective appear to be absent from the literature. What follows owes a lot to the ideas of homogenization of concordance homomorphisms as pursued in \cite{GambaudoGhys_BraidsSignatures} for Tristram-Levine signatures, and in general in~\cite{Brandenbursky_11} and the idea of proof of the slice-Bennequin inequality as pioneered by Rudolph~\cite{rudolph_QPasObstruction}. 

A real-valued knot invariant $I\colon \mathfrak{K}nots\to \R$ is a \emph{concordance homomorphism}, if $I(K\#J)=I(K)+I(J)$ and $I(K)\leq g_4(K)$ for all $K,J\in \mathfrak{K}nots$, where
$\mathfrak{K}nots$ denotes the set of isotopy classes of knots.\footnote{We note that, as the name `concordance homomorphism' suggests, such $I$ factor through the smooth concordance group and the induced map is a group homomorphism. However, we caution the reader to keep in mind that $I(K)\leq g_4(K)$ is a stronger condition, even if this is not reflected in the name.}  For each concordance invariant $I$,
\begin{equation}\label{eq:defhomI}
\widetilde{I}\to\R,\quad\beta\mapsto\widetilde{I}(\beta)\coloneqq \lim_{k\to\infty}\frac{I\left(\widehat{\beta^{nk}\delta}\right)(t)}{nk},\end{equation}
is a homogeneous quasimorphism with defect $D\leq\frac{n-1}{2}$; see~\cite[Lemma~A.1]{feller_hubbard}.
\begin{lemma}\label{lem:sBforhomI} Fix an integer $1\geq n$. For all $\beta\in B_n$, we have
$\widetilde{I}(\beta)\leq \frac{ -\chi_4\left(\widehat{\beta}\right)+n}{2}$.
\end{lemma}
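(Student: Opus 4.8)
The plan is to mimic the proof of Theorem~\ref{thm:sBineq} verbatim, now with the abstract concordance homomorphism $I$ in place of the specific choice $\Upsilon(t)/t$. First I would unwind the definition~\eqref{eq:defhomI}: for $\beta\in B_n$ we have $\widetilde I(\beta)=\lim_{k\to\infty}\frac{I(\widehat{\beta^{nk}\delta})}{nk}$. Using that $I$ is a concordance homomorphism, the bound $I(K)\leq g_4(K)=\frac{1-\chi_4(K)}{2}$ gives $I(\widehat{\beta^{nk}\delta})\leq\frac{1-\chi_4(\widehat{\beta^{nk}\delta})}{2}$ for every $k$, so
\[
\widetilde I(\beta)\;\leq\;\lim_{k\to\infty}\frac{1-\chi_4\!\left(\widehat{\beta^{nk}\delta}\right)}{2nk}.
\]

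Next I would invoke the already-established inequality~\eqref{eq:chi-chi}, which bounds $1-\chi_4(\widehat{\beta^{nk}\delta})$ from above by $nk(1-\chi_4(\widehat{\beta}))+nk(n-1)$; this step relies only on Lemma~\ref{lem:cobordisms}\eqref{item:coba} and subadditivity of $1-\chi_4$ under connected sum, both of which are available. Substituting and cancelling the factor $nk$ yields
\[
\widetilde I(\beta)\;\leq\;\frac{\bigl(1-\chi_4(\widehat{\beta})\bigr)+(n-1)}{2}\;=\;\frac{-\chi_4(\widehat{\beta})+n}{2},
\]
which is exactly the claimed bound. So the argument is a three-line chain: definition of $\widetilde I$, then $I\leq g_4$, then the cobordism estimate~\eqref{eq:chi-chi}.

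I do not anticipate a genuine obstacle here, since all the hard work—the cobordism construction of Lemma~\ref{lem:cobordisms} and the derivation of~\eqref{eq:chi-chi}—has already been carried out and makes no reference to the particular invariant. The only point requiring a word of care is that~\eqref{eq:chi-chi} is stated for $\widehat{\beta^{nk}\delta}$ with the same $\delta=a_1a_2\cdots a_{n-1}$ appearing in~\eqref{eq:defhomI}, so the normalizing twist matches and $\widehat{\delta}$ is an unknot contributing nothing to the connected-sum genus bound; I would simply point to this compatibility. One should also remark that the hypothesis $n\geq 1$ (the degenerate cases $n=1,2$) causes no trouble: for $n=1$ the braid group is trivial and the statement is vacuous, and for $n=2$ one is comparing $\widetilde I(\beta)$ against $\frac{-\chi_4(\widehat\beta)+2}{2}$, which again follows from the same chain. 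Hence the proof is essentially a citation of~\eqref{eq:chi-chi} together with the defining property $I(K)\leq g_4(K)$ of a concordance homomorphism.
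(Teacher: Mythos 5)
Your proposal is correct and follows exactly the paper's own argument: unwind the definition \eqref{eq:defhomI}, apply $I(K)\leq g_4(K)=\tfrac{1-\chi_4(K)}{2}$, then invoke the cobordism estimate \eqref{eq:chi-chi} and cancel the factor $nk$ in the limit. The paper's proof is precisely this three-step chain of (in)equalities, so there is nothing to add.
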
\begin{proof}
$\widetilde{I}(\beta)
\overset{\text{\eqref{eq:defhomI}}}
{=}\lim_{k\to\infty}\frac{I\left(\widehat{\beta^{nk}\delta}\right)}{nk}
{\leq} \lim_{k\to\infty}\frac{g_4\left(\widehat{\beta^{nk}\delta}\right)}{nk}
{=}\lim_{k\to\infty}\frac{1-\chi_4\left(\widehat{\beta^{nk}\delta}\right)}{2nk}
$

\hspace{1.46cm}$\overset{\text{\eqref{eq:chi-chi}}}{\leq}\lim_{k\to\infty}\frac{nk\left(1-\chi_4\left(\widehat{\beta}\right)\right)+nk(n-1)}{2nk}
\overset{\phantom{\text{\eqref{eq:fdtcviaUpsilon}}}}
{=}\frac{-\chi_4\left(\widehat{\beta}\right)+n}{2}.$
\end{proof}

In case $\beta\in B_n$ has a knot as its closure $\widehat{\beta}$, then $\left|I\left(\widehat{\beta}\right)-\widetilde{I}(\beta)\right|\leq \frac{n-1}{2}$ (this follows readily from Lemma~\ref{lem:cobordisms}, it is explicitly state in~\cite[Lemma~A.1]{feller_hubbard}), hence
\begin{equation}\label{eq:sBforI}
\widetilde{I}(\beta)\leq I\left(\widehat{\beta}\right)+\frac{n-1}{2}\leq g_4\left(\widehat{\beta}\right)+\frac{n-1}{2}=\frac{ -\chi_4\left(\widehat{\beta}\right)+n}{2}.
\end{equation}

\begin{Example}\label{Ex:I=sti}
We consider the case when $I$ is a \emph{slice torus invariant}---a concordance homomorphism $I$ with $I(T_{p,p+1})=g_4(T_{p,p+1})=(p-1)p/2$ for positive integers $p$. Slice torus invariants include Ozsv\'ath-Szab\'o's $\tau$~\cite{OzsvathSzabo_03_KFHandthefourballgenus} and Rasumussen's $s$~\cite{rasmussen_sInv}. In this case we have $\widetilde{I}=\wr/2$; see e.g.~\cite[Lemma~A.3]{feller_hubbard}. Hence, for such $I$, Lemma~\ref{lem:sBforhomI} recovers~\eqref{eq:sBineqwr}, and~\eqref{eq:sBforI} reads, for all $\beta\in B_n$ with closure a knot,
\[\wr(\beta)\leq 2I\left(\widehat{\beta}\right)+n-1\leq 2g_4\left(\widehat{\beta}\right)+{n-1}.\]
This is philosophically pleasing: the slice torus invariants, which are the concordance homomorphisms that are strong enough to reprove the local Thom conjecture (i.e.~$g_4(T_{p,p+1})=(p-1)p/2$ for all positive integers $p$~\cite{KronheimerMrowka_GenusofEmb}), homogenize to $\wr/2$ and hence recover the slice-Bennequin inequality, which Rudolph derived using only the local Thom conjecture as an elementary input.    
\end{Example}

\begin{Example}\label{Ex:I=Upsilon} If 
$I(K)\coloneqq \frac{\Upsilon_K(\frac{2}{n-1})}{n-1}+\frac{\tau(K)}{2}$,
then $\widetilde{I}=2\fd$ by~\eqref{eq:fdtcviaUpsilon}. Hence, Lemma~\ref{lem:sBforhomI} yields Theorem~\ref{thm:sBineq}, and~\eqref{eq:sBforI} reads, for all $\beta\in B_n$ with closure a knot,
\[\fd(\beta)\leq \frac{2\Upsilon_{\widehat{\beta}}(\frac{2}{n-1})}{n-1}+\tau\left(\widehat{\beta}\right)+n-1\leq 2g_4\left(\widehat{\beta}\right)+{n-1}.\]
\end{Example}
In light of \eqref{eq:sBforI}, we wonder whether every homogeneous quasimorphism that satisfies a slice-Bennequin inequality does arise as a homogenization. 
\begin{question}\label{q:hqmswithSBIareHom}
Fix $n\geq 3$ and 
let $f\colon B_n\to\R$ be a homogeneous quasimorphism. If there exist constants $A,C\in\R$ such that
\[\left|f(\beta)\right|\leq Ag_4\left(\widehat{\beta}\right)+C\text{ for all $\beta\in B_n$ with closure a knot},\]
Does there exist a concordance homomorphism $I$ and $r\in\R$ such that $f=r\widetilde{I}$?
\end{question}
In light of the fact that Question~\ref{q:qmwithoutSBI}, as far as we know, remains open, it seems that even a positive answer to the following is possible.
\begin{question}\label{q:hqmsareHom}
Fix $n\geq 3$ and 
let $f\colon B_n\to\R$ be a homogeneous quasimorphism.
Does there exist a concordance homomorphism $I$ and $r\in\R$ such that $f=r\widetilde{I}$?
\end{question}
This author strongly suspects that the answer to Question~\ref{q:hqmsareHom} is no, but is unable to provide a counterexample.

 \bibliographystyle{alpha} \bibliography{peterbib}
\end{document}